\newtheorem{theorem}{Theorem}
\newtheorem{proposition}{Proposition}
\newtheorem{lemma}{Lemma} 
\newtheorem{corollary}{Corollary}
\DeclareMathOperator{\rk}{rk}
\DeclareMathOperator{\val}{val}   \DeclareMathOperator{\Div}{Div}
\newtheorem*{subject}{2000 Mathematics Subject Classification}
\newtheorem*{keywords}{Keywords}
\author{Marc Coppens\footnote{KU  Leuven, Technologiecampus Geel, Departement Elektrotechniek (ESAT),
Kleinhoefstraat 4, B-2440 Geel, Belgium; email: marc.coppens@kuleuven.be.
Partially supported by the FWO-grant 1.5.012.13N}}
\title{A metric graph satisfying $w^1_4=1$ that cannot be lifted to a curve satisfying $\dim (W^1_4)=1$. }
\date{}
\begin{document}
\maketitle \noindent

\begin{abstract}
For all integers $g \geq 6$ we prove the existence of a metric graph $G$ with $w^1_4=1$ such that $G$ has Clifford index 2 and there is no tropical modification $G'$ of $G$ such that there exists a finite harmonic morphism of degree 2 from $G'$ to a metric graph of genus 1.
Those examples show that dimension theorems on the space classifying special linear systems for curves do not all of them have immediate translation to the theory of divisors on metric graphs.
\end{abstract}

\begin{subject}
14H51; 14T05; 05C99
\end{subject}

\begin{keywords}
metric graphs, curves, lifting problems, special divisors, Clifford index, dimension theorems
\end{keywords}

\section{Introduction}\label{section1}

Let $K$ be an algebraically closed field complete with respect to some non-trivial non-Archimedean valuation. Let $R$ be the valuation ring of $K$, let $\mathit{m}_R$ be its maximal ideal and $k=R/\mathit{m}_R$ the residu field.
Let $X$ be a smooth  complete curve of genus $g$ defined over $K$.
Associated to a semistable formal model $\mathfrak{X}$ over $R$ of $X$ there exists a so-called skeleton $\Gamma = \Gamma_ {\mathfrak{X}}$ which is a finite metrix subgraph of the Berkovich analytification $X^{an}$ of $X$ together with an augmentation function $a : \Gamma \rightarrow \mathbb{Z}^+$ such that $a(v)=0$ except for at most finitely many points (see e.g. \cite{ref3}).
In case all components of the special fiber $\mathfrak{X}_k$ are rational then this augmentation function is identically zero and we can consider $\Gamma$ as a metric graph.
This is the situation we consider in this paper.

There exists a theory of divisors and linear equivalence on $\Gamma$ very similar to the theory on curves and those theories on $X$ and $\Gamma$ are related by means of a specialisation map
\[
\tau_* : \Div (X) \rightarrow \Div(\Gamma) \text { .}
\]
For a divisor $E$ on $\Gamma$ one defines a rank $\rk(E)$ and for a divisor $D$ on $X$ the specialisation theorem says (see e.g. \cite{refextra}, since we restrict to the case of zero augmentation map this is in principle considered in \cite{ref4})
\[
\dim (\vert D \vert) \leq \rk (\tau_* (D)) \text { .}
\]

In the hyperelliptic case many classical results on linear systems on curves also do hold for linear systems on metric graphs.
As an example, if the graph $\Gamma$ has a very special linear system $g^r_{2r}$ then $\Gamma$ has a $g^1_2$ and $g^r_{2r}=rg^1_2$ (see \cite{ref15}; \cite{ref5}).
Hence the theory of linear systems of Clifford index 0 is the same for graphs as for curves.
This is not true for the theory of linear systems of Clifford index more than 0 (see \cite{ref2}).

For a curve $X$ the complete linear systems $g^n_d$ with $n \geq r$ are parametrized by a closed subscheme $W^r_d$ of the Jacobian $J(X)$ and $d-2r-\dim (W^r_d)$ gives a kind of generalisation of the Clifford index for moving linear systems on $X$.
In particular in case $r\leq g-1$ then $\dim (W^r_d)\leq d-2r$ and $\dim (W^r_d)=d-2r$ for some $r<g-1$ if and only if $X$ is hyperelliptic (see \cite{ref16}).
In \cite{ref13}  it is shown that using the dimension of a similar subspace $W^r_d$ of the Jacobian $J(\Gamma)$ of a metric graph $\Gamma$ this statement is not true.
Moreover in that paper the authors do introduce a much better invariant $w^r_d$ which is more close to the definition of the rank of a graph.
In \cite{ref9} it is proved that $w^r_d \geq \dim(W^r_d)$ in case $\Gamma$ is a skeleton of a curve $X$ over $K$.
At the moment is seems not known whether $w^r_d\geq d-2r$ for some $0<r<g-1$ implies $\Gamma$ is hyperelliptic.

In \cite{ref8} one finds a classification of all curves $X$ such that $\dim (W^r_d)=d-2r-1$.
Many  more generalisations are proved by different authors.
In this paper we show that the theory of curves satisfying $\dim (W^1_4)=1$ is different from the theory of graphs satisfying $w^1_4=1$.
In particular for all genus $g \geq 6$  we prove the existence of a metric graph $G_n$ of genus $g$ ($n=g-3$) satisfying $w^1_4=1$ that cannot be a skeleton of a curve satisfying $\dim (W^1_4)=1$.

In Section \ref{section2} we recall some generalities on graphs and the theory of divisors on graphs.
For generalities on the specialisation map and the relation between the metric graphs and skeleta inside Berkovich curves we refer to the references.
It is not needed to understand the arguments used in this paper, it is important for the motivation.
In Section \ref{section3} we give the description of the graph denoted by $G_n$ ($n$ an integer at least equal to 2) and we prove it satisfies $w^1_4=1$ and it has Clifford index equal to 2.
In Section \ref{section4} first we explain that in case $G_n$ could be lifted to a curve satisfying $\dim (W^1_4)=1$ then for some tropical modification $G'_n$ of $G_n$ there would exist a finite harmonic morphism $\pi : G'_n \rightarrow \Gamma$ of degree 2 with $\Gamma$ a metric graph of genus 1.
Finally in Section \ref{section4} we prove that such harmonic morphism does not exist.

\section{Generalities}\label{section2}

\subsection{Graphs}\label{subsection2.1}

A \emph{topological graph} $\Gamma$ is a compact topological space such that for each $P\in \Gamma$ there exists $n_P \in \mathbb{Z}^+$ and $\epsilon \in \mathbb{R}^+_0$ such that some neighborhood $U_P$ of $P$ in $\Gamma$ is homeomorphic to $\{ z = re^{2\pi ik/n_P} : 0 \leq r \leq \epsilon \text { and $k$ is an integer satisfying } 0 \leq k \leq n_P-1 \} \subset \mathbb{C}$ with $P$ corresponding to 0.
Such a topological graph $\Gamma$ is called \emph{finite} in case there are only finitely many points $P \in \Gamma$ satisfying $n_P \neq 2$.
We only consider finite topological graphs. 
We call $n_P$ the \emph{valence} of $P$ on $\Gamma$.
Those finitely many points of $\Gamma$ are called the \emph{essential vertices} of $\Gamma$.
The \emph{tangent space} $T_P(\Gamma)$ of $\Gamma$ at $P$ is the set of $n_P$ connected components of $U_P \setminus \{ P \}$ for $U_P$ as above.
In this definition, using another such neighborhood $U'_P$ then we identify connected components of $U_P \setminus \{ P \}$ and $U'_P \setminus \{ P \}$ in case their intersection is not empty.

A metric graph $\Gamma$ is a finite topological graph $\Gamma$ together with a finite subset $V_{\infty}(\Gamma)$ of the set of 1-valent points of $\Gamma$ and a complete metric on $\Gamma \setminus V_{\infty}(\Gamma)$.
A \emph{vertex set} of a metric graph $\Gamma$ is a finite subset of $\Gamma$ containing all essential vertices.
The pair $(\Gamma , V)$ is called a \emph{metric graph with vertex set $V$}.
The elements of $V$ are called the vertices of $(\Gamma , V)$.
The connected components of $\Gamma \setminus V$ are called the \emph{edges} of $(\Gamma , V)$.
The elements of $\overline{e} \setminus e$ are called the \emph{end vertices} of $e$ ($\overline{e}$ is the closure of $e$).
We always choose $V$ such that each edge has two different end vertices.
Using the metric on $\Gamma \setminus V_{\infty}(\Gamma)$ each edge $e$ of $\Gamma$ has a lenght $l(e) \in \mathbb{R}^+_0 \cup \{ \infty \}$.
Moreover $l(e)=\infty$ if and only if some end vertex of $e$ belongs to $V_{\infty}(\Gamma)$.
We write $E(\Gamma , V)$ to denote the set of edges of $(\Gamma , V)$.
The \emph{genus} of $(\Gamma, V)$ is defined by $\vert E(\Gamma ) \vert - \vert V(\Gamma )\vert +1$ and it is independent of the choice of $V$. Hence it is denoted by $g(\Gamma )$ and called the genus of $\Gamma$.

A \emph{subgraph} of a metric graph $(\Gamma, V)$ with vertex set is a closed subset $\Gamma' \subset \Gamma$ such that $(\Gamma ', \Gamma ' \cap V)$ is a metric graph with vertex set.
In case $\Gamma'$ is homeomorphic to $S^1$ then it is called a \emph{loop} in $(\Gamma, V)$.
A metric graph $\Gamma$ is called a \emph{tree} if $g(\Gamma )=0$.

\subsection{Linear systems on graphs}\label{subsection2.2}

We refer to Section 2 of \cite{ref2} for the definitions on a metric graph of a divisor, an effective divisor, a rational function, linearly equivalence of divisors, the canonical divisor and the rank of a divisor.
The rank of a divisor $D$ on a metric graph $\Gamma$ is denoted by $\rk (D)$ and if it is necessary to add the graph then we write $\rk_{\Gamma} (D)$.
For a divisor $D$ on a graph $\Gamma$ we write $D \geq 0$ to indicate it is an effective divisor on $\Gamma$.
A very important tool in the study of divisors on a metric graph $\Gamma$ is the concept of a reduced divisor at some point $P$ of $\Gamma$ (see Section 2.1 in \cite{ref2}) and the burning algorithm to decide whether a given divisor on $\Gamma$ is reduced at $P$ (see Section 2.2 in \cite{ref2}).

For a divisor $D$ on a metric graph $\Gamma$ we write $\vert D \vert$ to denote the set of effective divisors linearly equivalent to $D$.
As is the case of curves we call it the \emph{complete linear system} defined by $D$.
The rank $\rk (D)$ replaces the concept of the dimension of a complete linear system on a curve.
As in the case of curves we say the complete linear system $\vert D \vert$ is a linear system $g^r_d$ on $\Gamma$ if $\deg (D)=d$ and $\rk (D)=r$.
From of the Riemann-Roch Theorem for divisors on graphs it follows as in the case of curves that divisors $D$ on $\Gamma$ such that $\rk (D)$ cannot be computed in a trivial way are exactly those divisors satisfying $\rk (D) > \max \{ 0, \deg (D) -g(\Gamma )+1 \}$.
Those divisors are called \emph{very special}.
The \emph{Clifford index} of a very special divisor $D$ on $\Gamma$ is defined by $c(D)=\deg (D) - 2 \rk (D)$.
Clifford's Theorem for metric graphs implies $c(D) \geq 0$ for all very special divisors $D$ on $\Gamma$.
The Clifford index $c(\Gamma)$ of $\Gamma$ is the minimal value $c(D)$ for a very special divisor $D$ on $\Gamma$.

Motivated by the definition of the rank of a divisor on a metric graph one introduces the following replacement for the dimension of the space $W^r_d$ parametrizing linear systems $g^r_d$ on a curve.
In case $\Gamma$ has no linear system $g^r_d$ then $w^r_d=-1$.
Otherwise $w^r_d$ is the maximal integer $w\geq 0$ such that for each effective divisor $F$ of degree $r+w$ there exists an effective divisor $E$ of degree $d$ with $\rk (E)\geq r$ such that $E-F \geq 0$.

\subsection{Harmonic morphism}\label{subsection2.3}

Let $\Gamma$ and $\Gamma'$ be two metric graphs and let $\phi : \Gamma \rightarrow \Gamma'$ be a continuous map.
In case $V$ (resp. $V'$) is a vertex set of $\Gamma$ (resp. $\Gamma'$) then $\phi$ is called a \emph{morphism} from $(\Gamma', V')$ to $(\Gamma , V)$  if $\phi (V') \subset V$ and for each $e \in E(\Gamma , V)$ the set $\phi ^{-1} (\overline{e})$ is a union of closures of edges of $(\Gamma' , V')$.
Moreover if $e' \in E(\Gamma ', V')$ with $e' \subset \phi^{-1}(\overline{e})$ then either $\phi(e')$ is a vertex in $V'$ or the restriction $\phi_{e'} : e' \rightarrow e$ is a dilation with some factor $d_{e'}(\phi) \in \mathbb{Z}^+_0$.
In case $\phi (e')$ is a vertex then we write $d_{e'}(\phi)=0$.
We call $d_{e'}(\phi)$ the \emph{degree} of $\phi$ along $e'$.

We say $\phi$ is a morphism of metric graphs if there exist vertex sets $V$ (resp. $V'$) of $\Gamma$ (resp. $\Gamma'$) such that $\phi$ is a morphism from $(\Gamma' , V')$ to $(\Gamma, V)$.
In that case, for $v' \in T_{P'}(\Gamma')$ and $e'$ an edge of $(\Gamma ', V')$ such that $v'$ is defined by some connected component of $e' \setminus \{P' \}$ we set $d_{v'}(\phi)=d_{e'}(\phi)$.
Such morphism is called \emph{finite} in case $d_{e'}(\phi) >0$ for all $e' \in E(\Gamma ', V')$.
There is a natural map $d_{\phi}(P') : T_{P'} \setminus \{ v' : d_{v'}(\phi)=0 \} \rightarrow T_{\phi (P')}(\Gamma)$ defined as follows.
The connected component of $e' \setminus \{ P' \}$ defining $v' \in T_{P'} (\Gamma')$ with $d_{v'}(\phi) \neq 0$ is mapped to a connected component of $\phi (e') \setminus \{ \phi(P') \}$ and this defines $v \in  T_{\phi (P')}(\Gamma)$, then $d_{\phi}(P')(v')=v$.

The morphism $\phi : \Gamma ' \rightarrow \Gamma$ of metric graphs is called \emph{harmonic} at $P' \in \Gamma'$ if for each $v \in T_{\phi(P'}(\Gamma )$ the number
\[
\Sigma \{d_{v'}(\phi) : v' \in T_{P'}(\Gamma ') \text { and } d_{\phi}(P')(v')=v \}
\]
is independent of $v$.
In that case this sum is denoted by $d_{P'}(\phi)$ and it is called the \emph{degree} of $\phi$ at $P'$.
We say the morphism $\phi$ is harmonic if $\phi$ is surjective and $\phi$ is harmonic at each point $P' \in \Gamma '$.
In this case for $P \in \Gamma$ one has $\Sigma (d_{P'}(\phi) : \phi (P')=P )$ is independent of $P$ and it is called the \emph{degree} of $\phi$ denoted by $\deg (\phi)$.

An \emph{elementary tropical modification} of a metric graph $\Gamma$ is a metric graph $\Gamma '$ obtained by attaching an infinite closed edge to $\Gamma$ at some point $P\in \Gamma \setminus V_{\infty}(\Gamma)$.
A metric graph obtained from $\Gamma$ as a composition of finitely many elementary tropical modifications is called a \emph{tropical modification} of $\Gamma$.
Two metric graphs $\Gamma_1$ and $\Gamma_2$ are called \emph{tropically equivalent} if there is a common tropical modification $\Gamma$ of $\Gamma _1$ and $\Gamma _2$.
This terminology can be found in e.g. \cite{ref17} together with some examples. 

\section{The example}\label{section3}

The metric graph $G_0$ we start with has genus 2 and can be seen in figure \ref{Figuur 1}.
\begin{figure}[h]
\begin{center}
\includegraphics[height=5 cm]{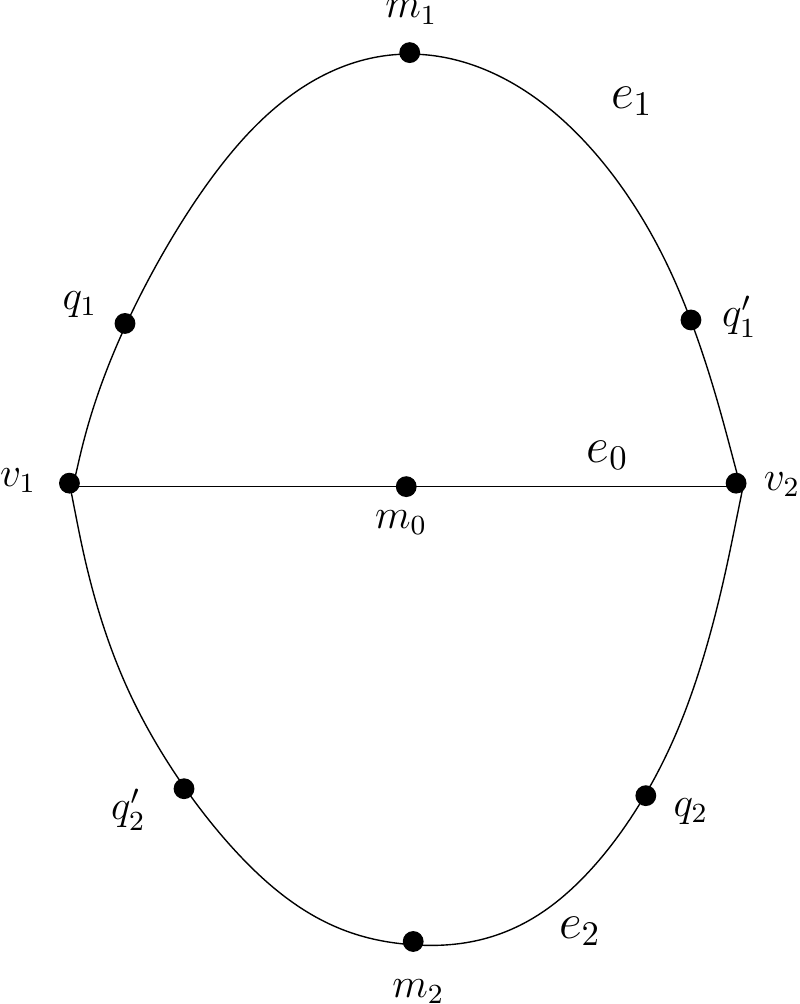}
\caption{the graph $G_0$}\label{Figuur 1}
\end{center}
\end{figure}
Here $v_1$ and $v_2$ are two points of valence 3 (all other points have valence 2) and they are connected by three edges $e_0$, $e_1$ and $e_2$ of mutually different lenghts.
For $0 \leq i \leq 2$ the point $m_i$ is the midpoint of $e_i$.

\begin{lemma}\label{lemma1}
The graph $G_0$ has a unique $g^1_2$ given by $\vert v_1 + v_2 \vert = \vert 2m_0 \vert = \vert 2m_1 \vert = \vert 2m_2 \vert$ and in case $v \in G_0$ such that $2v \in \vert v_1 + v_2 \vert$ then $v=m_i$ for some $0 \leq i \leq2$.
\end{lemma}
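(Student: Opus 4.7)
The plan is to first establish that $v_1+v_2$ is a $g^1_2$, then argue uniqueness via Riemann-Roch, then identify precisely which points $v$ satisfy $2v \in |v_1+v_2|$.

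For the first part, I would show directly that $2m_i \sim v_1+v_2$ for each $i \in \{0,1,2\}$ by exhibiting an explicit rational function $f_i$ with divisor $v_1+v_2-2m_i$: take $f_i$ constant on the two edges $e_j, e_k$ with $j,k\neq i$, and on $e_i$ make it a tent function with slope $+1$ on the half from $v_1$ to $m_i$ and slope $-1$ on the half from $m_i$ to $v_2$. To show $\rk(v_1+v_2)\geq 1$, for every point $P\in G_0$ I would produce an effective representative of $|v_1+v_2|$ containing $P$. If $P\in\{v_1,v_2\}$, the divisor $v_1+v_2$ itself works; if $P$ lies in the interior of edge $e_i$ at distance $d$ from $v_1$, let $P^*$ be the point of $e_i$ at distance $d$ from $v_2$, and check that $P+P^*\sim v_1+v_2$ by the same kind of piecewise linear function (constant on $e_j,e_k$, with appropriate slopes on $e_i$ yielding a plateau between $P$ and $P^*$). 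Combined with Clifford's inequality $\rk(K)\leq g-1=1$ applied to the canonical divisor, which by the adjunction formula for graphs equals $\sum_v(\mathrm{val}(v)-2)v = v_1+v_2$, this gives $\rk(v_1+v_2)=1$.

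Uniqueness of the $g^1_2$ then follows easily from Riemann-Roch: if $\deg D=2$ and $\rk D\geq 1$, then $\rk(K-D)=\rk(D)-1\geq 0$; but $\deg(K-D)=0$, so $K-D\sim 0$ and $D\sim K=v_1+v_2$. This gives all the equalities $|v_1+v_2|=|2m_0|=|2m_1|=|2m_2|$.

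The main obstacle, and the step I expect to require real work, is showing that only the $m_i$ satisfy $2v\sim v_1+v_2$. Suppose $2v\sim v_1+v_2$; then there is a rational function $f$ on $G_0$ with $\mathrm{div}(f)=2v-v_1-v_2$. Because $f$ has no breakpoints outside the support of its divisor, $f$ is linear on every edge not containing $v$. If $v=v_1$, this forces $v_1\sim v_2$, and the conditions ``$s_il_i$ constant in $i$'' together with $\sum s_i=\pm 1$ imply $s_i = -\prod_{j\neq i}l_j / S$ with $S:=l_0l_1+l_0l_2+l_1l_2$; since $0<\prod_{j\neq i}l_j<S$, the $s_i$ cannot be integers, contradiction. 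If instead $v$ lies in the interior of $e_i$ at distance $d$ from $v_1$, let $a,b$ be the integer slopes of $f$ on $e_j, e_k$, let $s_1,s_2$ be the integer slopes on the two halves of $e_i$. Writing down the harmonicity conditions at $v_1,v_2,v$ together with the consistency relation $s_1 d + s_2(l_i-d) = al_j = bl_k$, I expect to derive the clean formula
\[
d = \frac{l_i}{2} - \frac{aS}{2 l_k}.
\]
Since $S = l_il_k + l_il_j + l_jl_k > l_il_k$, the requirement $0<d<l_i$ forces $a=0$, which immediately gives $d=l_i/2$, i.e.\ $v=m_i$. The bookkeeping of the slope identities is the only delicate part; the decisive inequality $S>l_il_k$ is elementary.
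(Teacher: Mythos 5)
Your proposal is correct, but it reaches the conclusion by a genuinely different route from the paper in two of the three steps. For uniqueness, you observe that $v_1+v_2$ is the canonical divisor of $G_0$ and invoke Riemann--Roch: any degree-$2$ divisor $D$ of rank $\geq 1$ on a genus-$2$ graph has $\rk(K-D)\geq 0$ with $\deg(K-D)=0$, hence $D\sim K$. The paper instead argues with reduced divisors: for $v\neq v_2$ the divisor $v_1+v$ is $v_2$-reduced, so $\vert v_1+v-v_2\vert=\emptyset$ and $\rk(v_1+v)=0$. Your argument is cleaner and more general (it shows any $g^1_2$ on any genus-$2$ metric graph is canonical), while the paper's stays inside the reduced-divisor toolkit it uses throughout. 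For the final step (only the midpoints $m_i$ satisfy $2v\sim v_1+v_2$), you do an explicit slope computation, solving the harmonicity and consistency equations to get $d=l_i/2\pm aS/(2l_k)$ and using $S>l_il_k$ to force the integer $a$ to vanish; I checked the bookkeeping and it is sound (the sign in front of $aS/(2l_k)$ depends on orientation conventions and is immaterial), as is the case $v=v_1$ where the non-integrality of $\prod_{j\neq i}l_j/S$ rules out $v_1\sim v_2$ (state the symmetric case $v=v_2$ for completeness). The paper's version of this step is much shorter: the unique effective representative of $\vert v_1+v_2\vert$ containing $v$ is $v+v'$ with $v'$ the mirror point, and since the degree-one divisor $v'$ is $v$-reduced whenever $v'\neq v$, one gets $\vert v_1+v_2-2v\vert=\vert v'-v\vert=\emptyset$. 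So your proof trades the slick reduced-divisor observation for elementary but heavier piecewise-linear algebra; both are valid, and notably the decisive inequality $S>l_il_k$ in your version does not even use the hypothesis that the three edge lengths are distinct, which the paper also never needs in this lemma.
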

\begin{proof}
Clearly $2m_i \in \vert v_1 + v_2 \vert$ for $0 \leq i \leq 2$ and in case $v \in e_i \setminus \{ v_1, v_2, m_i \}$ then taking $v'$ on $e_i$ such that the distance on $e_i$ from $v$ to $v_1$ is equal to the distance of $v'$ to $v_2$, then $v+v' \in \vert v_1+v_2 \vert$.
This proves $\rk (v_1 + v_2)=1$ (it cannot have rank 2 because $g(G_0)\neq 0$).
It is well-known that a graph of genus at least 2 has at most one $g^1_2$ (see \cite{ref1}).
Indeed, for this graph $G_0$, if $v \neq v_2$ then $v_1 + v$ is clearly $v_2$-reduced, hence $\vert v_1 + v - v_2 \vert = \emptyset$ and therefore $\rk (v_1 + v)=0$.
This proves the uniqueness of $g^1_2$ on $G_0$.

Finally for $v+v' \in \vert v_1 + v_2 \vert$ as before (including the possibility $v+v' = v_1 + v_2$), since $v'$ is a $v$-reduced divisor one has $\vert v'-v \vert = \emptyset$ hence $\vert v_1 + v_2 -2v \vert = \emptyset$.
This proves $2v \in \vert v_1 + v_2 \vert$ implies $v=m_i$ for some $0 \leq i \leq2$.
\end{proof}

As indicated in figure \ref{Figuur 1} we fix $q_i \in ] v_i,m_i [ \subset e_i$ for $i=1,2$.

\begin{lemma}\label{lemma2}
There is no $g^1_3$ on $G_0$ such that $\vert g^1_3 - 2m_0 \vert \neq \emptyset$ and $\vert g^1_3 -2q_i \vert \neq \emptyset$ for $i=1,2$.
\end{lemma}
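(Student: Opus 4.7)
The plan is to use the uniqueness of the $g^1_2$ on $G_0$ (Lemma \ref{lemma1}) to pin down any hypothetical $g^1_3$ satisfying the three hypotheses so rigidly that the extra conditions at $q_1$ and $q_2$ become incompatible.

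Suppose such a $g^1_3=|D|$ exists. Since $2m_0 \sim v_1+v_2$, the hypothesis $|D-2m_0|\neq\emptyset$ produces a point $p\in G_0$ with $D\sim v_1+v_2+p$. Similarly $|D-2q_i|\neq\emptyset$ gives a point $p_i'$ with $D\sim 2q_i+p_i'$ for $i=1,2$. Let $q_i^*$ denote the reflection of $q_i$ through $m_i$ on $e_i$. The reflection argument in the proof of Lemma \ref{lemma1} gives $q_i+q_i^*\sim v_1+v_2$, and the condition $q_i\in\,]v_i,m_i[$ forces $q_i^*\in e_i\setminus\{v_1,v_2,m_i\}$; in particular $q_i^*\neq q_i$ and $q_i^*\notin\{v_1,v_2,m_0,m_1,m_2\}$. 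Substituting $v_1+v_2\sim q_i+q_i^*$ in the two expressions for $D$ and cancelling $q_i$ yields
\[
p+q_i^* \sim p_i'+q_i \qquad (i=1,2).
\]

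The next step is a case split on the rank of the degree-$2$ divisor $p+q_i^*$. If $\rk(p+q_i^*)=1$, then $|p+q_i^*|$ is a $g^1_2$, which by Lemma \ref{lemma1} must coincide with $|v_1+v_2|$; the partner of $q_i^*$ in this system is unique (two effective degree-$1$ divisors that are linearly equivalent are equal) and the reflection construction applied to $q_i^*$ supplies $q_i$ as such a partner, so $p=q_i$. If $\rk(p+q_i^*)=0$, then $|p+q_i^*|$ contains a single effective divisor, so $p+q_i^*=p_i'+q_i$ as divisors; matching the two multisets and using $q_i^*\neq q_i$ again forces $p=q_i$ (and $p_i'=q_i^*$). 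In either case $p=q_i$.

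Running this conclusion for $i=1$ and $i=2$ gives simultaneously $p=q_1$ and $p=q_2$, which is impossible since $q_1$ and $q_2$ lie in the interiors of different edges $e_1$ and $e_2$. The step requiring the most care is the rank-$1$ branch: one must rule out the degenerate representatives $2m_j$ and $v_1+v_2$ of $|v_1+v_2|$ when identifying the partner of $q_i^*$, which is exactly what the position of $q_i^*$ strictly inside the half of $e_i$ opposite to $q_i$ guarantees. Once that verification is in place, both branches collapse neatly to $p=q_i$, and the conflict between $i=1$ and $i=2$ completes the argument.
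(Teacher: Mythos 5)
There is a genuine gap in the rank-$0$ branch, and it is fatal to the strategy. On a metric graph, $\rk(E)=0$ does \emph{not} imply that $\vert E \vert$ consists of a single effective divisor (this has no analogue for curves, where $r(D)=0$ forces $h^0=1$). Concretely, take $p=v_1$, so the auxiliary divisor is $v_1+q_1^*$ (in the paper's notation $q_1^*=q'_1$). Writing $a=d(v_1,q_1)\in\,]0,l(e_1)/2[$, the tent-shaped rational function supported on the segment of $\overline{e_1}$ between $v_1$ and $q_1^*$ shows that
\[
v_1+q_1^* \sim q_1+x, \qquad x\in e_1 \text{ the point with } d(v_1,x)=l(e_1)-2a \text{ ,}
\]
so $\vert v_1+q_1^*\vert$ contains at least two distinct effective divisors even though $\rk(v_1+q_1^*)=0$ (it is not the $g^1_2$, since $v_1+q_1^*$ is $v_2$-reduced). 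This does not merely invalidate your justification; it refutes the conclusion of the branch, since $p+q_1^*\sim q_1+p'_1$ holds here with $p=v_1\neq q_1$. Worse, the two indices cannot both land in the rank-$1$ branch: if $\rk(p+q_1^*)=1$ then $p=q_1$, whence $\rk(p+q_2^*)=\rk(q_1+q_2^*)=0$, so every run of your argument must pass through the broken case.

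What actually saves the lemma is an asymmetric, location-dependent statement: depending on which edges of $G_0$ contain $p$, \emph{one} of the two divisors $q_i^*+p$ is $q_i$-reduced and does not contain $q_i$, whence $\vert D-2q_i\vert=\vert q_i^*+p-q_i\vert=\emptyset$ for that particular $i$. This is the paper's proof: it writes $D\sim v_1+v_2+v$ and checks, via the burning algorithm, that $q'_1+v$ is $q_1$-reduced when $v\in(e_0\cup e_2)\setminus\{v_1\}$, and that $q'_2+v$ is $q_2$-reduced in the complementary case $v\in(e_0\cup e_1)\setminus\{v_2\}$. In the example above ($p=v_1$) one indeed has $\vert D-2q_1\vert\neq\emptyset$ but $\vert D-2q_2\vert=\emptyset$, so any correct proof must distinguish the two indices according to the position of $p$; a symmetric argument concluding $p=q_i$ separately for each $i$ cannot succeed. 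Your reduction to $p+q_i^*\sim p'_i+q_i$ and your rank-$1$ branch are fine, but the rank-$0$ branch must be replaced by a reduced-divisor (or equivalent) analysis.
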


\begin{proof}
For $i=1,2$ we take $q'_i \in e_i$ such that the distance on $e_i$ from $v_1$ to $q_i$ is equal to the distance on $e_i$ from $v_2$ to $q'_i$.
Assume $g^1_3$ on $G_0$ with $\vert g^1_3 - 2m_0 \vert \neq \emptyset$, hence there exists $v \in G_0$ such that $g^1_3 = \vert g^1_2 + v \vert$.

First assume $v \in (e_0 \cup e_2) \setminus \{v_1 \}$.
Then $q_1 + q'_1 + v \in g^1_3$ and clearly $q'_1 + v$ is a $q_1$-reduced divisor.
This implies $\vert q'_1+v-q_1 \vert = \vert g^1_3 - 2q_1 \vert = \emptyset$.
In case $v \notin (e_0 \cup e_2) \setminus \{ v_1 \}$ then certainly $v \in (e_0 \cup e_1) \setminus \{v_2 \} $ and using similar arguments we find $\vert g^1_3 - 2q_2 \vert = \emptyset$.
This finishes the proof of the lemma.
\end{proof}

Now for an integer $n\geq 2$ we make a graph $G_n$ as follows.
Fix some more general different points $q_3, \cdots , q_n$ on $G_0$.
Then $G_n$ is obtained from $G_0$ by attaching a loop $\gamma_0$ at $m_0$ and loops $\gamma_i$ at $q_i$ for each $1 \leq i\leq n$ (we also are going to denote $m_0$ by $q_0$).
As an example see a possible picture of $G_6$ in figure \ref{Figuur 2}.
\begin{figure}[h]
\begin{center}
\includegraphics[height=5 cm]{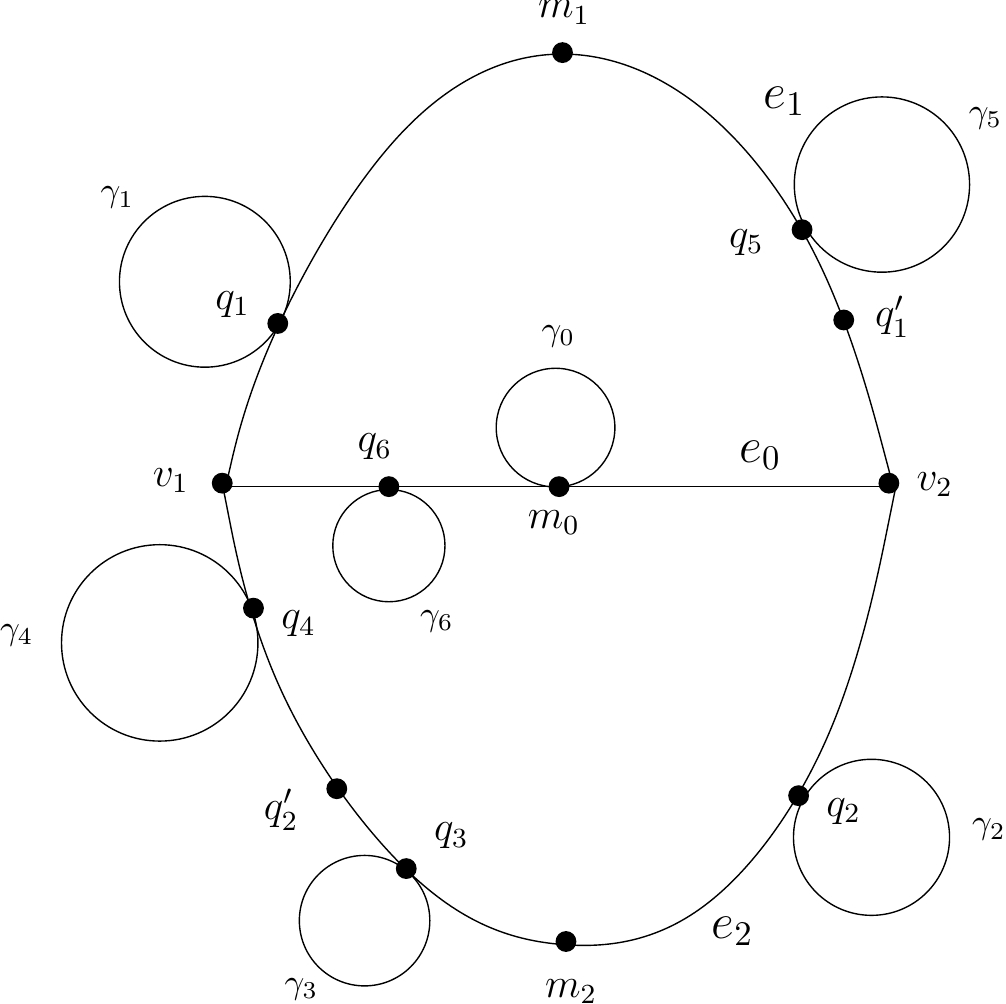}
\caption{the graph $G_6$}\label{Figuur 2}
\end{center}
\end{figure}
Clearly $g(G_n)=n+3$.
We prove that the Clifford index of $G_n$ is at least 2.

\begin{proposition}\label{proposition1}
Let $r$ be an integer with $1 \leq r \leq n$.
Then $G_n$ has no $g^r_{2r+1}$ in case $n\geq 2$.
\end{proposition}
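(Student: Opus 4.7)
The plan is to reduce the problem to $G_0$ via a retraction and then combine Lemma~\ref{lemma2} with induction on $n$ and Riemann--Roch duality. First I would define the retraction $\phi\colon G_n\to G_0$ that collapses each loop $\gamma_i$ onto its attachment point $q_i$, and check that $\phi_*$ preserves linear equivalence: a slope computation shows that for any rational function $f$ on $G_n$ the sum of slopes of $f$ around each loop vanishes, so $\phi_*(\operatorname{div}(f))=\operatorname{div}(f|_{G_0})$. The same argument applies to the retraction $\psi_n\colon G_n\to G_{n-1}$ that collapses only $\gamma_n$. With these in hand, the proof splits into three parts: the base case $r=1$, the intermediate range $2\leq r\leq n-1$ by induction on $n$, and the top case $r=n$ by residuation through the canonical divisor.

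For the base case $r=1$, suppose $|D|$ is a $g^1_3$ on $G_n$. Then $\phi_*(D)$ has degree $3$ and rank $\geq 1$ on $G_0$, hence is a $g^1_3$ on the genus-$2$ graph $G_0$. To invoke Lemma~\ref{lemma2} I need $|\phi_*(D)-2q_i|_{G_0}\neq\emptyset$ for $i=0,1,2$ (with $q_0=m_0$). Fix such $i$, pick a general interior point $p_i\in\gamma_i$, and use $\rk(D)\geq 1$ to get $D'\sim D$ with $D'\geq p_i$. The key observation is that the class $\delta_i:=[D|_{\gamma_i}-\deg(D|_{\gamma_i})\,q_i]\in\operatorname{Jac}(\gamma_i)$ depends only on the linear equivalence class of $D$ on $G_n$: comparing $D$ and $D+\operatorname{div}(f)$ shows that the restrictions to $\gamma_i$ differ by a principal divisor on $\gamma_i$ plus an integer multiple of $q_i$ coming from the $G_0$-side slopes of $f$ at $q_i$. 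If $\deg(D'|_{\gamma_i})=1$ then $D'|_{\gamma_i}=p_i$, forcing $[p_i-q_i]=\delta_i$ in $\operatorname{Jac}(\gamma_i)$; but the map $p\mapsto [p-q_i]$ from $\gamma_i$ to its Jacobian is a bijection, so a generic choice of $p_i$ violates this. Hence $\deg(D'|_{\gamma_i})\geq 2$, so $\phi_*(D')(q_i)\geq 2$ and $|\phi_*(D)-2q_i|_{G_0}\neq\emptyset$. Applying this for $i=0,1,2$ contradicts Lemma~\ref{lemma2}.

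For the intermediate range $2\leq r\leq n-1$, I would argue by induction on $n$, starting at $n=2$ where this range is empty. If $|D|$ is a $g^r_{2r+1}$ on $G_n$, then $\psi_{n,*}(D)$ has degree $2r+1$ and rank $\geq r$ on $G_{n-1}$ of genus $n+2$; since $r\leq n-1$ the rank strictly exceeds $\max\{0,\,2r-n\}$, so the divisor is very special, and Clifford's theorem forces its rank to be exactly $r$. Thus $|\psi_{n,*}(D)|$ is a $g^r_{2r+1}$ on $G_{n-1}$, contradicting the inductive hypothesis. Finally, for the top case $r=n$, Riemann--Roch applied to $K-D$ yields a divisor of degree $3$ and rank $1$ on $G_n$, i.e.\ a $g^1_3$, which is impossible by the base case. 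The main obstacle of the whole argument is the Jacobian-invariance step in the base case: one must carefully compare the divisor of a rational function on $G_n$ near $q_i$ with the divisor of its restriction to $\gamma_i$ alone, and identify the discrepancy as an integer multiple of $q_i$ so that the class $\delta_i\in\operatorname{Jac}(\gamma_i)$ is well-defined; the remaining pieces are routine once this invariant is in place.
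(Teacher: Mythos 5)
Your proof is correct, but it is organized quite differently from the paper's. The paper first proves that $G_n$ has no $g^1_2$, and then treats the whole range $1\le r\le n-1$ at once: it picks points $v_i\in\gamma_i\setminus\{q_i\}$ and $v_3,\dots,v_{r+1}$ on further loops, uses the rank to produce divisors $E_i\in g^r_{2r+1}$ containing all of them, and shows that each restriction $E_i\cap\gamma_j$ must be equivalent on $\gamma_j$ to $m_{i,j}q_j$ with $m_{i,j}\ge 2$ --- the alternative would produce a $g^r_{2r}$ on $\overline{G_n\setminus\gamma_j}$ and hence, by the Clifford-index-zero classification for graphs, a $g^1_2$, which was excluded. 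Subtracting the common part $2q_3+\cdots+2q_{r+1}$ then yields the forbidden $g^1_3$ on $G_0$ of Lemma~\ref{lemma2}. Your route replaces both ingredients: the cases $2\le r\le n-1$ are disposed of by pushing forward along the retraction $G_n\to G_{n-1}$ and inducting on $n$ (Clifford's theorem only pins the rank at exactly $r$, and in fact rank $\ge r$ already suffices), so that only $r=1$ needs a direct argument; and the key step forcing $\deg(D'|_{\gamma_i})\ge 2$ comes from the well-defined class $\delta_i\in\operatorname{Jac}(\gamma_i)$ together with genericity of $p_i$, rather than from $q_j$-reduced divisors on the loops. Your invariance computation for $\delta_i$ is the crux and it does hold: the only contribution of $\operatorname{div}(f)|_{\gamma_i}$ beyond $\operatorname{div}_{\gamma_i}(f|_{\gamma_i})$ is the multiple of $q_i$ coming from the $G_0$-directions of $f$ at $q_i$, and it is cancelled by the normalization $-\deg(D|_{\gamma_i})\,q_i$. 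What your approach buys is independence from the ``no $g^1_2$'' preliminary and from the hyperelliptic classification of graphs; what the paper's buys is that it never needs the Abel--Jacobi bijection on a circle nor an auxiliary generic point, working instead with explicit members of the linear system. The $r=n$ case via Riemann--Roch duality is identical in both.
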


\begin{proof}

First we show $G_n$ has no linear system $g^1_2$ in case $n\geq 1$ (here $G_1=G_0 \cup \gamma_0 \cup \gamma_1$).
Assume there is a $g^1_2$ on $G_n$.
Take $1 \leq i \leq n$ and let $v \in \gamma_i \setminus \{ q_i \}$ and $v' \in G_n$ such that $v+v' \in g^1_2$.
Let $G^0_n$ be the closure of $G_n \setminus \gamma_i$ and assume $v' \in G^0_n$.
It follows from Corollary 1 in \cite{ref2} that $\rk _{G^0_n}(v')\geq 1$, because $g(G^0_n)>0$ this is impossible.
Hence $v' \in \gamma_i$.
On $\gamma_i$ there exists $v"$ such that $v + v'$ is linearly equivalent to $q_i + v"$ as divisors on $\gamma_i$, hence $q_i + v" \in g^1_2$ because of Lemma 1 in \cite{ref2} (that lemma does not depend on the particular graph used in \cite{ref2}).
But we proved this implies $v"=q_i$, hence $2q_i \in g^1_2$. 
This implies $\rk_{G_0}(2q_i)=1$.
Indeed, take $p\in G_0 \setminus \{ q_i \}$ and let $D_p$ be the $p$-reduced divisor on $G_0$ linearly equivalent to $2q_i$.
We need to show $D_p - p \geq 0$.
The burning algorithm applied to $G_n$ implies $D_p$ is a $p$-reduced divisor on $G_n$ too.
Since $\rk_{G_n}(2q_i)=1$ it follows $D_p-p \geq 0$.
So we obtain $\rk _{G_0}(2q_i)=1$, but from Lemma \ref{lemma1} we know this cannot be true, hence $G_n$ has no linear system $g^1_2$.

Fix some integer $r$ satisfying $1\leq r\leq n-1$ and assume $G_n$ has a linear system $g^r_{2r+1}$.
For $0\leq i\leq r+1$ fix $v_i \in \gamma_i$ with $v_i \neq q_i$.
For $0 \leq i \leq 2$ there exists $E_i \in g^r_{2r+1}$ satisfying
\[
E_i - (v_i + v_3 + \cdots v_{r+1}) \geq 0 \text { .}
\]
For $j \in \{ 3, \cdots , r+1 \} \cup \{ i \}$ let $D_{i,j}=E_i \cap (\gamma_j \setminus \{ q_j \})$, hence $D_{i,j} - v_j \geq 0$.
In case for some $j$ the $q_j$-reduced divisor on $\gamma_j$ linearly equivalent to $D_{i,j}$ contains a point $v'_j$ different from $q_j$ (then the point $v'_j$ is unique) then there is an effective divisor $E'$ on $\overline{G_n \setminus \gamma_j}$ of degree $2r$ such that $E'+v'_j \in g^r_{2r+1}$.
From Corollary 1 in \cite{ref2} it follows $\rk _{\overline{G_n \setminus \gamma_j}}(E')=r$, hence $\overline{G_n \setminus \gamma_j}$ has a linear system $g^r_{2r}$.
Since $g(\overline{ G_n \setminus \gamma_j})=n+2$ and $2\leq 2r \leq 2n-2$ this would imply $\overline{G_n \setminus \gamma_j}$ is a hyperelliptic graph (main result of \cite{ref15}).
But we proved $\overline{G_n \setminus \gamma_j}$ is not hyperelliptic (it is a graph $G_{n-1}$; the proof of that argument also works on $\overline{G_n \setminus \gamma _0}$ in case $j=0$).
Since $v_j$ is not linearly equivalent to $q_j$ as a divisor on $\gamma_j$ it follows $D_{i,j}$ is linearly equivalent to $m_{i,j}q_j$ for some $m_{i,j}\geq 2$ on $\gamma_j$.

So we obtain $E'_i \in g^r_{2r+1}$ on $G_n$ such that
\[
E'_i - (2q_i + 2q_3 + \cdots + 2q_{r+1}) \geq 0
\]
for $0 \leq i \leq 2$ and $E'_i$ is contained in $G_0$.
Because of Lemma 2 in \cite{ref2} those divisors $E'_i$ are linearly equivalent as divisors on $G_0$.
It follows $E"_i=E'_i-(2q_3 + \cdots + 2q_{r+1})$ with $0 \leq i \leq 2$ are effective linearly equivalent divisors on $G_0$ of degree 3 with $E"_i - 2q_i \geq 0$.
Since $g(G_0)=2$ each divisor of degree 3 on $G_0$ defines a $g^1_3$ and we obtain a contradiction to Lemma \ref{lemma2}.

Finally if $G_n$ has an $g^n_{2n+1}$ then because of the Theorem of Riemann-Roch $\vert K_{G_n}-g^n_{2n+1} \vert = g^1_3$, we already excluded this case.
\end{proof}

\begin{proposition}\label{proposition2}
On $G_n$ we have $w^1_4=1$.
\end{proposition}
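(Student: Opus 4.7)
I will establish the two inequalities $w^1_4 \geq 1$ and $w^1_4 \leq 1$ separately.

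For $w^1_4 \geq 1$ I must show that every effective divisor $F$ of degree $2$ on $G_n$ is contained in some effective divisor $E$ of degree $4$ with $\rk_{G_n}(E) \geq 1$. I will split into four cases according to where the two points of $F = p_1 + p_2$ sit. If both $p_1, p_2 \in G_0$, take $E = F + v_1 + v_2$. If $p_1 \in G_0$ and $p_2 \in \gamma_l$, pick $\tilde{p}_2 \in \gamma_l$ with $p_2 + \tilde{p}_2 \sim 2q_l$ on $\gamma_l$ and set $E = p_1 + p_2 + \tilde{p}_2 + q_l$, so $E \sim p_1 + 3q_l$. If $p_1 \in \gamma_l$ and $p_2 \in \gamma_{l'}$ with $l \neq l'$, take $E = p_1 + \tilde{p}_1 + p_2 + \tilde{p}_2 \sim 2q_l + 2q_{l'}$. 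Finally, if $p_1, p_2$ both lie on $\gamma_l$, Riemann--Roch on the genus-one loop gives $\rk_{\gamma_l}(4q_l) = 3$, so one can find $y_1, y_2 \in \gamma_l$ with $p_1 + p_2 + y_1 + y_2 \sim 4q_l$ and set $E = p_1 + p_2 + y_1 + y_2$. In each case the verification $\rk_{G_n}(E) \geq 1$ reduces to checking $|E - x| \neq \emptyset$ for every $x \in G_n$: for $x \in G_0$ by Riemann--Roch on $G_0$ applied to $E - x$ of degree $3$, and for $x \in \gamma_l$ by using $-x \sim -2q_l + \tilde{x}$ and then Riemann--Roch on $G_0$ applied to a degree-$2$ remainder (which is automatic since $2 = g(G_0)$).

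For $w^1_4 \leq 1$ I must exhibit an effective divisor $F$ of degree $3$ contained in no effective degree-$4$ divisor of rank $\geq 1$. Using $n \geq 2$, I fix three distinct indices $l_1, l_2, l_3 \in \{0, 1, \ldots, n\}$ and pick points $p_i \in \gamma_{l_i} \setminus \{q_{l_i}\}$ chosen generically relative to the already fixed attaching points $q_0, q_1, \ldots, q_n$; set $F = p_1 + p_2 + p_3$. Suppose for contradiction $E = F + w$ has $\rk_{G_n}(E) \geq 1$ for some $w \in G_n$. Proposition~\ref{proposition1} together with Clifford's theorem for graphs forces $\rk_{G_n}(E) = 1$, so $|E - x| \neq \emptyset$ for every $x \in G_n$. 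Specializing to $x = \tilde{p}_i$ for $i = 1, 2, 3$ and using the loop relation $p_i + \tilde{p}_i \sim 2q_{l_i}$, I obtain three linear-equivalence constraints on the class $[E] \in \mathrm{Pic}^4(G_n)$. A case analysis on the location of $w$ (on $G_0$, on one of $\gamma_{l_1}, \gamma_{l_2}, \gamma_{l_3}$, or on some other loop $\gamma_l$), combined with chip-firing reductions to $G_0$ and the uniqueness of the $g^1_2$ on $G_0$ provided by Lemma~\ref{lemma1}, will show that no $w$ can meet all three constraints; this gives the required contradiction.

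The main obstacle is the second part. The first part is essentially routine: every case reduces to a small Riemann--Roch computation on the genus-two graph $G_0$ together with the elementary loop move $x + \tilde{x} \sim 2q_l$. The second part is driven by the heuristic that three translates of the $3$-dimensional effective locus $W^0_3(G_n) \subset \mathrm{Pic}^3(G_n)$ should generically fail to intersect inside the $(n+3)$-dimensional $\mathrm{Pic}^3(G_n)$ once $n \geq 2$, but converting this dimension count into a rigorous argument forces one to eliminate systematically every possible location of $w$ and to invoke the generic position of the attaching points $q_i$ on $G_0$ (together with Lemma~\ref{lemma1}) to preclude any accidental coincidence that would rescue a candidate $w$.
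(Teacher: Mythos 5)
Your lower-bound half ($w^1_4\geq 1$) is essentially the paper's own proof: the same four-way case split on the location of the two points of $F$, the same auxiliary divisors ($v_1+v_2$, $2q_l$, $2q_l+2q_{l'}$, resp.\ a multiple of $q_l$ for two points on one loop), and the same mechanism, namely that every effective degree-$4$ divisor supported on $G_0$ has rank $2$ on $G_0$ by Riemann--Roch and hence rank at least $1$ on $G_n$. The one point you should make explicit is why linear equivalence on the subgraphs $G_0$ and $\gamma_l$ persists on all of $G_n$; the paper does this by invoking Lemmas 1 and 2 of \cite{ref2}, and without that step ``Riemann--Roch on $G_0$'' only controls $\rk_{G_0}$, not $\rk_{G_n}$.

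The upper-bound half is where you and the paper diverge, in both directions. The printed proof of this proposition in fact only establishes $w^1_4\geq 1$ and never argues $w^1_4\leq 1$, so you are attempting strictly more than the paper does here. However, what you offer for $w^1_4\leq 1$ is a plan rather than a proof: the decisive step (``a case analysis on the location of $w$ \dots will show that no $w$ can meet all three constraints'') is left unexecuted, and the appeal to generic position of the attaching points suggests the wrong mechanism, since no genericity is needed beyond $p_i\neq q_{l_i}$. The argument that actually closes this, parallel to the proof of Proposition \ref{proposition1}, is a degree count: if $E=p_1+p_2+p_3+w$ had rank at least $1$, then for each $i$ the part of $E$ lying on $\gamma_{l_i}\setminus\{q_{l_i}\}$ would have to be linearly equivalent on $\gamma_{l_i}$ to a multiple of $q_{l_i}$ (otherwise, exactly as in Proposition \ref{proposition1}, one produces a $g^1_3$ on $\overline{G_n\setminus\gamma_{l_i}}$, a graph of type $G_{n-1}$, which that proposition's reasoning excludes); since $p_i$ is not linearly equivalent to $q_{l_i}$ on the loop, each such part has degree at least $2$, forcing $\deg E\geq 6>4$. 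As written, your second half is a genuine gap, even though the intended conclusion is correct and your choice of test divisor $F$ is the right one.
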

\begin{proof}
Fix $v_1, v_2 \in G_n$.
We need to prove that there exists a $g^1_4$ on $G_n$ such that $\vert g^1_4 -v_1 - v_2 \vert \neq \emptyset$.
In case $D$ is an effective divisor of degree 4 on $G_0$ then from the Riemann-Roch Theorem it follows $\rk _{G_0} (D)=2$.
This implies for each $0 \leq i \leq n$ there is an effective divisor $D' \geq 2q_i$ linearly equivalent to $D$ on $G_0$.
Because of Lemma 2 in \cite{ref2} the divisor $D'$ is linearly equivalent to $D$ on $G_n$.
Moreover for $v \in \gamma_i$ there is an effective divisor on $\gamma_i$ linearly equivalent to $2q_i$ containing $v$ and using the same lemma we obtain the existence of an effective divisor on $G_n$ linearly equivalent to $D$ containing $v$.
Similarly, for $v \in G_0$ we obtain an effective divisor on $G_0$ linearly equivalent to $D$ and containing $v$ and again this divisor is also linearly equivalent to $D$ as a divisor on $G_n$.
This proves $\rk _{G_n}(D)\geq 1$.

In case $v_1, v_2 \in G_0$ we can use an effective divisor $D$ on $G_0$ containing $v_1 + v_2$.
Then we have $\rk_{G_n}(D)\geq 1$ and $\vert D-v_1 -v_2 \vert \neq \emptyset$.
Next assume $v_1 \in G_0$ and $v_2 \in \gamma_i \setminus \{ q_i \}$ for some $0 \leq i \leq n$.
On $G_0$ take an effective divisor $D$ of degree 4 containing $v_1+2q_i$.
Since $2q_i$ is linearly equivalent to $v_2+ v'_2$ for some $v'_2\in \gamma_i$ as a divisor on $\gamma_i$, again using lemma 1 in \cite{ref2} we find that $D-2q_i+v_2+v'_2$ is an effective divisor linearly equivalent to $D$ on $G_n$.
Hence $\rk_{G_n}(D)\geq 1$ and $\vert D-v_1-v_2 \vert \neq \emptyset$.
Assume $v_1 \in \gamma_{i_1} \setminus \{ q_{i_1} \}$ and  $v_2 \in \gamma_{i_2} \setminus \{ q_{i_2} \}$ for some $i_1 \neq i_2$.
Then one makes a similar argument using the divisor $2q_{i_1}+2q_{i_2}$ on $G_0$.
Finally if $v_1, v_2 \in \gamma_i \setminus \{ q_i \}$ for some $i$ then one uses an effective divisor $D$ of degree 4 on $G_0$ containing $3q_i$.
Since $3q_i$ is linearly equivalent on $\gamma_i$ to an effective divisor $D$ containing $v_1+v_2$ again we find $\rk _{G_n}(D)\geq 1$ and $\vert D-v_1-v_2 \vert \neq \emptyset$.
\end{proof}

\section{The lifting problem}\label{section4}

The lifting problem associated to $G_n$ we are going to consider now is the following.
Let $K$ be an algebraically closed complete non-archimedean valued field and let $X$ be a smooth algebraic curve of genus $g$.
Let $X^{an}$ be the analytification of $X$ (as a Berkovich curve).
Let $R$ be the valuation ring of $K$, let $\mathfrak{X}$ be a strongly semistable model of $X$ over $R$ such that the special fiber has only rational components and let  $\Gamma$ be the associated skeleton.
Is it possible to obtain this situation such that $\Gamma = G_n$ and $\dim (W^1_4 (X))=1$?
In that case, taking into account the result from \cite{ref9} mentioned in the introduction, this would give a geometric explanation for $w^1_4(G_n)=1$.
This lifting problem will be the motivation for considering the existence of a certain harmonic morphism associated to $G_n$.
Making this motivation we are going to refer to some suited papers for terminology and some definitions.
The definitions necessary to understand the question on the existence of the harmonic morphism are given in Section \ref{subsection2.3}.
Finally we are going to prove that the harmonic morphism does not exist, proving that the lifting problem has no solution.
In particular we obtain that the classification of metric graphs satisfying $w^1_4 =1$ is different from the classification of smooth curves satisfying $\dim (W^1_4)=1$.

Assume the lifting problem has a solution.
The curve $X$ of that solution cannot be hyperelliptic since $G_n$ is not hyperelliptic.
From \cite{ref8} one obtains the following classification of non-hyperelliptic curves $X$ of genus at least 6 satisfying $\dim (W^1_4(X))=1$: $X$ is trigonal (has a $g^1_3$), $X$ is a smooth plane curve of degree 5 (hence has genus 6 and has a $g^2_5$) or $X$ is bi-elliptic (there exists a double covering $\pi : X \rightarrow E$ with $g(E)=1$).
From Proposition \ref{proposition1} we know $G_n$ has no $g^1_3$ and no $g^2_5$, hence the curve $X$ has to be bi-elliptic.

So assume there exists a morphism $\pi : X \rightarrow E$ with $g(E)=1$ of degree 2.
This induces a map $\pi^{an}:X^{an} \rightarrow E^{an}$ between the Berkovich analytifications.
In case $E$ is not a Tate curve then each strong semistable reduction of $E$ contains a components of genus 1 in its special fiber, in particular the augmentation map of the associated skeleton has a unique point with value 1.
Otherwise such skeleton can be considered as a metric graph of genus 1.
Each skeleton associated to a semistable reduction of $X$ is tropically equivalent to the graph $G_n$, in particular it can be considered as a metric graph.
From the results in \cite{ref12} it follows that there exist skeletons $\tilde{\Gamma}$ (resp. $\Gamma$) of $X$ (resp. $E$) such that $\pi$ induces a finite harmonic morphism $\tilde{\Gamma} \rightarrow \Gamma$ of degree 2.
Since $\tilde{\Gamma}$ is a metric graph (augmentation map identically zero) this is also the case for $\Gamma$ hence $\Gamma$ is a metric graph.
So in case $G_n$ is liftable to smooth curve $X$ satisfying $\dim (W^1_4(X))=1$ then there exist a tropical modification $\tilde{\Gamma}$ of $G_n$ and a metric graph $\Gamma$ of genus 1 such that there exists a finite harmonic morphism $\tilde {\pi} : \tilde {\Gamma} \rightarrow \Gamma$ of degree 2.
We are going to prove that such finite harmonic morphism does not exist.
In the proof the following lemma will be useful.

\begin{lemma}\label{lemma3}
Let $\phi : (\Gamma _1, V_1) \rightarrow (\Gamma_2 , V_2)$ be a finite harmonic morphism between metric graphs with vertex sets.
Let $(T' , V') \subset (\Gamma_1, V_1)$ be a subgraph such that $T'$ is a tree, $\overline{\Gamma_1 \setminus T'} \subset \Gamma_1$ is connected and $\overline{ \Gamma_1 \setminus T'} \cap T'$ consists of a unique point $t$ (in particular $t\in V_1$).
There is no subtree $(T,V)$ of $(T',V')$ different from a point such that $\phi (T)$ is contained in a loop $\Gamma \subset \Gamma_2$.
\end{lemma}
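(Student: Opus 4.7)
The plan is to argue by contradiction. Assume there exists a non-trivial subtree $T \subset T'$ with $\phi(T) \subset \Gamma$, where $\Gamma \cong S^1$ is a loop in $\Gamma_2$. Because $T$ is simply connected, $\phi|_T : T \to \Gamma$ factors through the universal covering $\pi : \mathbb{R} \to \Gamma$, producing a continuous lift $\tilde\phi : T \to \mathbb{R}$. On each segment of $T$ lying inside an edge $e'$ of $\Gamma_1$, $\tilde\phi$ is linear with slope $\pm d_{e'}(\phi) \neq 0$ (nonzero because $\phi$ is finite). Since $T$ contains at least one edge, $\tilde\phi$ is non-constant.

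Enlarge $T$ to the tree $T^*$, defined as the connected component of $T' \cap \phi^{-1}(\Gamma)$ containing $T$. As a connected subgraph of the tree $T'$, $T^*$ is itself a subtree of $T'$, and it is maximal among subtrees of $T'$ containing $T$ whose $\phi$-image lies in $\Gamma$. The lift extends continuously to $\tilde\phi : T^* \to \mathbb{R}$, still non-constant and still with nonzero slope on each edge. Let $q \in T^*$ attain the maximum of $\tilde\phi$. Nonzero slopes on edge interiors force $q$ to be a vertex of $T^*$, and since $\tilde\phi$ is non-increasing away from $q$ along every $T^*$-tangent direction, each $v' \in T_q(T^*)$ is mapped by $d_\phi(q)$ to the same tangent direction $v^- \in T_{\phi(q)}(\Gamma) \subset T_{\phi(q)}(\Gamma_2)$ — namely the one projecting under $\pi$ to the negative direction at $\tilde\phi(q) \in \mathbb{R}$.

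Apply the harmonic condition of $\phi$ at $q$ to the opposite tangent direction $v^+ \in T_{\phi(q)}(\Gamma) \subset T_{\phi(q)}(\Gamma_2)$: one obtains $v'' \in T_q(\Gamma_1)$ with $d_{v''}(\phi) > 0$ and $d_\phi(q)(v'') = v^+$, and necessarily $v'' \notin T_q(T^*)$. Let $e'$ be the edge of $\Gamma_1$ carrying $v''$. Then $\phi$ dilates $e'$ onto an edge of $\Gamma_2$ emanating from $\phi(q)$ in direction $v^+ \in \Gamma$; after possibly refining the vertex set of $\Gamma_2$ so that $\Gamma$ is a union of edges of $\Gamma_2$, this image edge lies entirely in $\Gamma$, hence $e' \subset \phi^{-1}(\Gamma)$. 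If $q \neq t$, the hypothesis $\overline{\Gamma_1 \setminus T'} \cap T' = \{t\}$ forces every tangent direction at $q$ in $\Gamma_1$ to lie in $T'$, so $e' \subset T'$, giving $e' \subset T' \cap \phi^{-1}(\Gamma)$. But $\overline{e'} \cup T^*$ is connected, contained in $T' \cap \phi^{-1}(\Gamma)$, and meets $T^*$ at $q$; hence it lies in the single component $T^*$, forcing $\overline{e'} \subset T^*$ and $v'' \in T_q(T^*)$ — a contradiction. So $q = t$.

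The identical argument applied to a minimum point of $\tilde\phi|_{T^*}$ forces the minimum to be attained at $t$ as well. Since $\tilde\phi$ is non-constant on $T^*$, its maximum and minimum values are distinct, yet both equal the single value $\tilde\phi(t)$ — contradiction, so no such $T$ can exist. The main obstacle is the step in paragraph three: producing a tangent direction $v''$ outside $T^*$ and then showing that the edge it carries actually sits inside $T' \cap \phi^{-1}(\Gamma)$ uses the harmonic condition, the loop structure of $\Gamma$, and the structural hypothesis on $T'$ simultaneously.
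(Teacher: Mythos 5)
Your proof is correct, but it reaches the contradiction by a genuinely different global mechanism than the paper. The shared core is the local move at a boundary point: harmonicity plus the fact that a loop has exactly two tangent directions at each point produces a tangent direction $v''$ mapping into $\Gamma$ that lies outside the current tree, and the hypothesis $\overline{\Gamma_1\setminus T'}\cap T'=\{t\}$ forces the corresponding edge to stay inside $T'$ unless the point is $t$. The paper packages this as a growth argument: it observes the a priori bound $l(T)\leq \deg(\phi)\,l(\Gamma)$, picks a leaf $q\neq t$ of $T$, and uses the local move to enlarge $T$ by at least the minimal edge length of $\Gamma_1$, iterating until the length bound is violated. You instead lift $\phi|_{T^*}$ through the universal cover $\mathbb{R}\to\Gamma$ (legitimate since $T^*$ is simply connected), note the lift has nonzero slope on every edge by finiteness, and run a maximum principle: at an extremum all $T^*$-directions map to one of the two loop directions, so the local move applies, forcing both the maximum and the minimum of the non-constant function $\tilde\phi$ to sit at the single point $t$ --- a contradiction. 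Your route avoids the iteration and the quantitative length bookkeeping, at the cost of invoking covering-space theory and the maximality of the component $T^*$ (which is what replaces the paper's "keep enlarging" step). Two small points you should make explicit: $T^*$ is compact, so the extrema exist --- this holds because an infinite edge cannot dilate into the finite-length loop $\Gamma$; and the vertex-set refinement making $\Gamma$ a union of closed edges must be accompanied by a compatible refinement of $V_1$ so that $\phi$ remains a morphism of graphs with vertex sets. Neither is a gap; the paper's own proof leaves the analogous points implicit.
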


\begin{proof}
Assume $T$ is a subtree of $T'$ not being one point and assume $\phi (T)$ is contained in a loop $\Gamma$ of $\Gamma_2$.
Let $l(T)$ (resp. $l(\Gamma)$) be the sum of the lenghts of all the edges of $T$ (resp. $\Gamma$).
By definition one has $l(T) \leq \deg (\phi) l(\Gamma)$.
We are going to prove that we have to be able to enlarge $T$ such that $l(T)$ grows with a fixed lower bound.
Repeating this a few times gives a contradiction to the upper bound $\deg (\phi) l(\Gamma)$.

Let $q \in V$ be a point of valence 1 on $T$ such that $q \neq t$ and let $f$ be the edge of $T$ having $q$ as a vertex point.
This edge $f$ defines $v \in T_q(\Gamma_1)$, let $w=d_{\phi}(q)(v)$, hence $\phi(q) \in \Gamma$ and $w \in T_{\phi(q)}(\Gamma)$.
Since $\Gamma$ is a loop there is a unique $w' \in T_{\phi (q)}(\Gamma)$ with $w' \neq w$ and since $\phi$ is harmonic there exists $v' \in T_q(\Gamma_1)$ with $d_{\phi}(q)(v')=w'$.
Let $f'$ be the edge in $\Gamma_1$ having $q$ as a vertex point and defining $v'$.
Since $f' \neq f$ and $q \neq t$ one has $f'$ is an edge of $\overline{T' \setminus T}$.
Since $T'$ is a tree, also $T' \cup f'$ is a tree and one has $\phi (T \cup f') \subset \Gamma$.
Moreover $l(T \cup f') = l(T) + l(f')$ and $l(f')$ has as a fixed lower bound the minimal length of an edge contained in $\Gamma_1$.
\end{proof}

\begin{theorem}
There does not exist a tropical modification $G'_n$ of $G_n$ such that there exists a graph $E$ with $g(E)=1$ and a finite harmonic morphism $\phi : G'_n \rightarrow E$ of degree 2.
\end{theorem}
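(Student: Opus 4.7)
My plan is to suppose for contradiction that a tropical modification $G'_n$ of $G_n$, a metric graph $E$ with $g(E)=1$, and a finite harmonic morphism $\phi:G'_n\to E$ of degree $2$ exist, and then to derive a contradiction by analyzing the rigid structure of $G_0\subset G_n\subset G'_n$. Since $\deg(\phi)=2$, the morphism $\phi$ is canonically associated with an isometric involution $\iota:G'_n\to G'_n$ satisfying $\phi=\phi\circ\iota$, whose quotient is (isometric to) $E$ and whose fixed locus is the ramification locus of $\phi$. Applying Lemma 3 to each infinite edge added by tropical modification (which is a tree hanging off $G_n$ at a single attachment point), no such added tree can map nontrivially into the unique cycle of $E$; combined with the fact that an isometry must send infinite-length edges to infinite-length edges, $\iota$ restricts to an isometric involution of the finite subgraph $G_n$.

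Now consider $\iota|_{G_0}$. If $\iota|_{G_0}$ is the identity, then $\phi|_{G_0}$ is a homeomorphism onto its image, so $g(\phi(G_0))=g(G_0)=2$; but every connected subgraph of $E$ has genus at most $g(E)=1$, a contradiction. Hence $\iota|_{G_0}$ is a nontrivial isometric involution of $G_0$. Since the three edges $e_0,e_1,e_2$ have pairwise distinct lengths, each is preserved setwise by $\iota$. An isometry of a segment fixing both endpoints is the identity, so any isometry of $G_0$ fixing both $v_1$ and $v_2$ would be trivial; hence $\iota$ must swap $v_1\leftrightarrow v_2$, and on each edge $e_i$ the unique isometry swapping endpoints is the reflection about $m_i$. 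Therefore $\iota|_{G_0}$ is forced to be the hyperelliptic involution.

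Finally, I derive the contradiction from the action of $\iota$ on the loop-attachment points. The hyperelliptic involution fixes $q_0=m_0$, but for each $i\geq 1$ it sends $q_i$ to its mirror point $q'_i$ across the midpoint of the edge containing $q_i$. Since $\iota$ is an isometry of $G'_n$, the local data at $q_i$---valence $4$ with a loop $\gamma_i$ of finite length attached---must match that at $q'_i$. However, $q'_i$ is an interior point of an edge, with valence $2$ in $G_n$, and any tropical modification at $q'_i$ can only add infinite tree-edges; no such modification produces a finite-length loop at $q'_i$ isometric to $\gamma_i$, which is exactly what $\iota(\gamma_i)$ would have to be. This incompatibility rules out the existence of $\phi$.

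The main obstacle in this plan is the first paragraph: rigorously justifying that $\iota$ restricts to an isometric involution of $G_n$ in spite of the freedom afforded by tropical modifications. This is precisely where Lemma 3 is indispensable, since it prevents the added trees from wrapping around the cycle of $E$ and thereby forces $\iota$ to preserve the finite core $G_n$. Once that rigidity is secured, the remainder---uniqueness of the hyperelliptic involution on $G_0$, and the mismatch between finite-length loops and infinite tree-edges at the mirror points---is a clean case analysis.
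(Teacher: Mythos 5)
Your route is genuinely different from the paper's. The paper never introduces an involution: it argues directly on the images, showing in three steps that $\phi(G_0)$ must have genus $1$, that $\phi\bigr|_{e_1}$ must then be an isometry onto a loop $e$ of $E$ (using Lemma \ref{lemma3} to rule out dilation-$2$ edges and back-and-forth folding, via the loops $\gamma_i$), and finally that $\phi(v_1m_2v_2)$ would have to be an arc of $e$ of length $l(v_1m_2v_2)$, contradicting the fact that the three edge lengths of $G_0$ are distinct. Your argument replaces all of this with the rigidity of isometric involutions of $G_0$ (the distinct lengths force the hyperelliptic involution) plus the observation that the attachment points $q_i$ are not invariant under it. Where it works, your argument is shorter and more conceptual, and it localizes the role of the paper's genericity choices ($q_1\in\,]v_1,m_1[$, $q_2\in\,]v_2,m_2[$, $q_3,\dots,q_n$ general) very cleanly.

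However, there is one genuine gap: the existence of the deck transformation $\iota$ is asserted, not proved, and it is the foundation of everything that follows. For a finite harmonic morphism of degree $2$ one can indeed define $\iota(P')$ to be the other point of $\phi^{-1}(\phi(P'))$ when the fibre has two points and $P'$ itself otherwise, but you must then verify that $\iota$ is continuous and an isometry; this uses harmonicity in an essential way (over each edge of $E$ there are either two edges of $G'_n$ with $d_{e'}(\phi)=1$, which $\iota$ swaps isometrically, or a single edge with $d_{e'}(\phi)=2$, which $\iota$ fixes pointwise; at a point with $d_{P'}(\phi)=1$ the map $\phi$ is a local isometry onto a star neighborhood of $\phi(P')$, which gives continuity of the swap). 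None of this is in the paper's toolkit, so it must be supplied. Note also that your parenthetical claim that the quotient $G'_n/\iota$ is isometric to $E$ is false in general (an edge with $d_{e'}(\phi)=2$ has its length doubled in $E$ but not in the quotient); fortunately you never use it. Two smaller repairs: the justification that $\iota$ preserves $G_n$ should not go through Lemma \ref{lemma3} and infinite edges (the added trees may contain finite edges); the clean reason is that $G_n$ is the union of all simple closed curves of $G'_n$, hence invariant under any self-homeomorphism, and the same observation shows $\iota(G_0)=G_0$ and that $\iota(\gamma_1)$ would have to be a cycle meeting $G_0$ only in $\iota(q_1)$. Finally, your last step needs, and should state, that $\iota(q_1)\notin\{q_0,\dots,q_n\}$, which holds for $q_1$ by the explicit choice $q_1\in\,]v_1,m_1[$ together with the genericity of $q_3,\dots,q_n$.
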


\begin{proof}
Assume $G'_n$ is a tropical modification of $G_n$ and $\phi : G'_n \rightarrow E$ is a finite harmonic morphism of degree 2 of metric graphs with $g(E)=1$.

\noindent \underline{Step 1:} $g(\phi (G_0))=1$.

Assume $g(\phi (G_0))=0$.
We are going to prove that $\phi (G_0)$ looks as in figure \ref{Figuur 3} with $\phi \bigr| _{v_1m_iv_2} : v_1m_iv_2 \rightarrow [\phi (v_1), \phi (m_i)]$ has degree 2 and $\sharp (( \phi \bigr| _{v_1m_iv_2} )^{-1} (q))=2$ for all $q \in [ \phi(v_1), \phi(m_i) [$ for $0 \leq i \leq 2$. 
\begin{figure}[h]
\begin{center}
\includegraphics[height=3 cm]{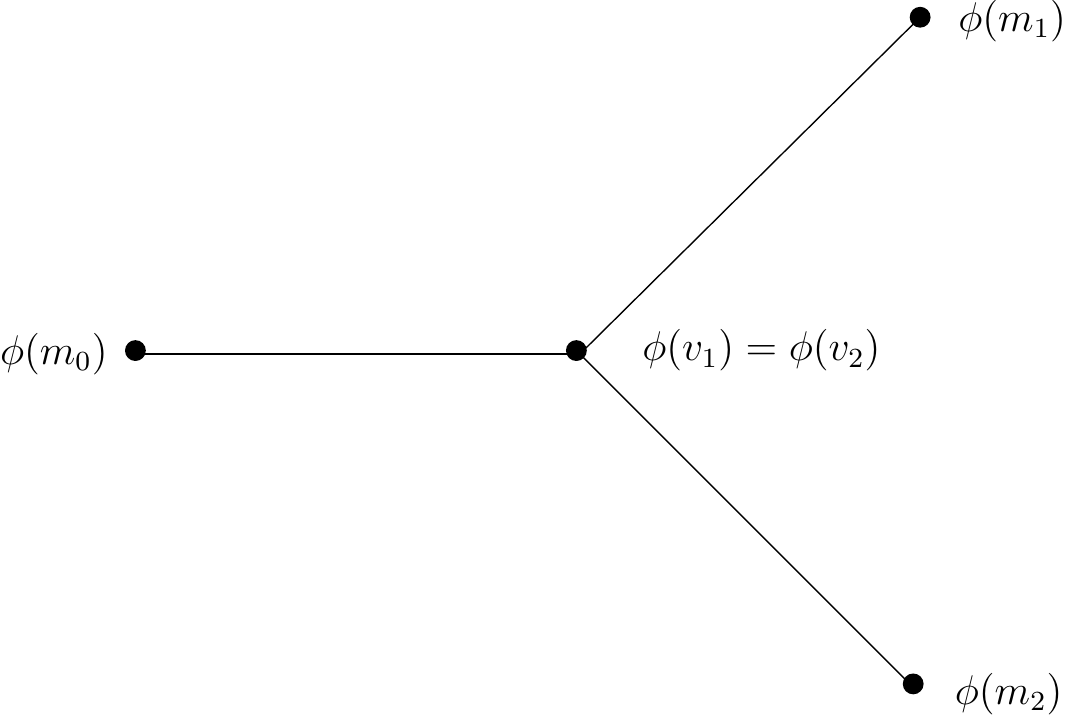}
\caption{In case $g(\phi (G_0)=0$}\label{Figuur 3}
\end{center}
\end{figure}

Consider the loop $e_1=v_1m_1v_2m_0v_1$.
Since $\phi (G_0)$ has genus 0 it follows $\phi (e_1)$ is a subtree $T_1$ of $\phi (G_0)$.
Since $T_1$ is the image of a loop and $\deg (\phi)=2$, it follows that $\sharp ( (\phi \bigr| _{e_1})^{-1}(q))=1$ for some $q \in \phi (e_1)$ if and only if $q$ is a point of valence 1 of $\phi (e_1)$.
Since $\deg (\phi) = 2$ it follows  $d_q(\phi)=2$ for such point $q$ and in particular $d_q(\phi)=1$ for $q \in e_1$ if $\phi (q)$ does not have valence 1 on $\phi (e_1)$.
In case $\phi (e_1)$ would have a point $q'$ of valence 3 then there exist at least 3 different points $q$ on $e_1$ with $\phi (q)=q'$, contradicting $\deg (\phi)=2$.
Hence $\phi (e_1)$ can be considered as a finite edge with two vertices.
Also for each $q\in e_1$ and $v \in T_q(e_1)$ one has $d_v(\phi ) =1$.

Assume $\phi (v_1) \neq \phi (v_2)$.
Then $\phi (v_1m_2v_2)$ is a path from $\phi (v_1)$ to $\phi (v_2)$ outside of $\phi (e_1)$.
This would imply $g(\phi (G_0))\geq 1$, contradicting $g(\phi (G_0))=0$, hence $\phi (v_1) = \phi (v_2)$.
Repeating the previous arguments for the loop $e_2=v_1m_2v_2m_0v_1$ one obtains the given description for $\phi \bigr| {G_0} : G_0 \rightarrow \phi (G_0)$.

Consider $\phi (q_1) \in \phi (v_1m_1v_2)$ and $q'_1 \neq q_1$ on $v_1m_1v_2$ with $\phi (q_1)=\phi (q'_1)$.
We obtain $d_{q_1}(\phi)=1$.
In case $\phi (\gamma_1)$ would be a tree then it would imply $\phi (q_1)$ cannot be a point of valence 1 on $\phi (\gamma_1)$ hence there exists $q'' _1 \in \gamma_1 \setminus \{ q_1 \}$ with $\phi (q_1)=\phi (q''_1)$.
Hence $\sharp ( \phi^{-1}(\phi (q)))\geq 3$, contradicting $\deg (\phi)=2$.
It follows $g(\phi(\gamma_1))=1$, hence $\phi (\gamma_1)$ contains a loop $e'_1$ in $E$.
Using the same argument as before we obtain $\phi (q_1) \in e'_1$.
Also $e'_1 \cap \phi (G_0) = \{ \phi (q_1) \}$.

Repeating the arguments using $q_2$ and $\gamma_2$ we obtain a loop $e'_2$ in $E$ such that $e'_2 \cap \phi (G_0)= \{ \phi (q_2) \}$, hence $e'_1 \cap e'_2 = \emptyset$.
Since $g(E)=1$ this is impossible.
As a conclusion we obtain $g(\phi(G_0))=1$.

In case $\phi (e_1)$ would have genus 0, from the previous arguments it follows that for $q \in v_1m_2v_2 \setminus \{ v_1, v_2 \}$ one has $\phi (q) \notin \phi (e_1)$.
In case $\phi (v_1) \neq \phi (v_2)$ it implies $\phi(e_2)$ has genus 1 with $e_2=v_1m_0v_2m_1v_1$.
In case $\phi (v_1) = \phi (v_2)$ and $g(\phi (e_2))=0$ too, it would imply $g(\phi (G_0))=0$.
Hence we can assume $\phi (e_1)$ has genus 1 (but then $\phi (e_2)$ could have genus 0).

\noindent \underline{Step 2:} $\phi \bigr | _{e_1} : e_1 \rightarrow \phi (e_1)$ is an isomorphism (meaning it is finite harmonic of degree 1)

Since $g(\phi (e_1))=1$ it follows there is a loop $e$ in $\phi (e_1)$, finitely many points $r_1, \cdots , r_t$ on $e$ and finitely many trees $T_i$ inside $\phi (e_1)$ such that

\noindent \hspace{1cm} $T_i \cap e = \{ r_i \}$

\noindent \hspace{1cm} $T_i \cap T_j = \emptyset$ in case $i \neq j$

\noindent \hspace{1cm} $\phi (e_1)= e \cup T_1 \cup \cdots \cup T_t$

\noindent (of course $t=0$, hence $\phi (e_1)$ is a loop, is also possible; we are going to prove that $t=0$).

\begin{figure}[h]
\begin{center}
\includegraphics[height=3 cm]{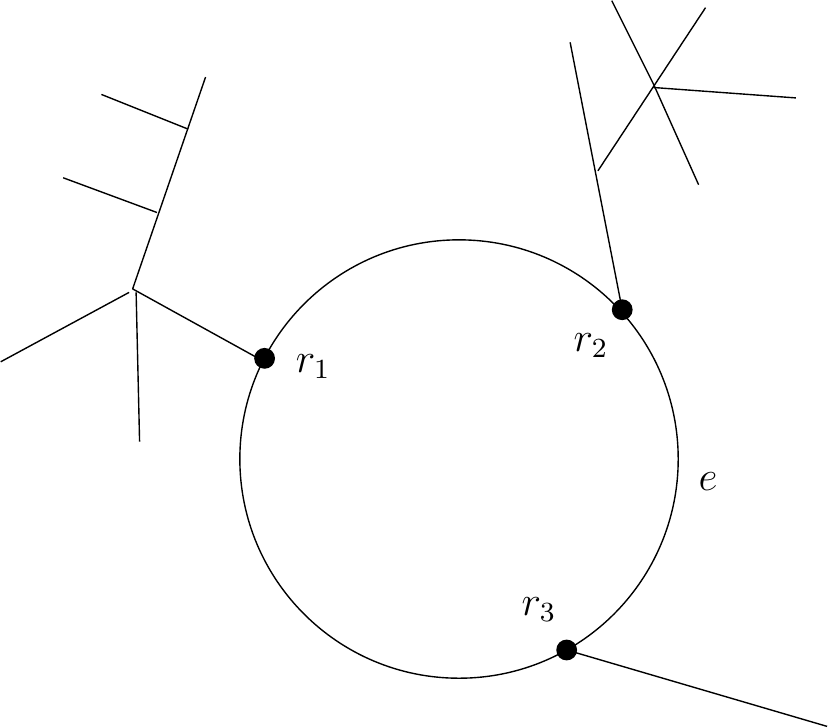}
\caption{$g(\phi (e_1))=1$}\label{Figuur 4}
\end{center}
\end{figure}
In case $\val _{\phi (e_1)}(r_i) > 3$ for some $1 \leq i \leq t$ then $\phi ^{-1} (r_i)$ contain at least 3 different points on $e_1$, contradicting $\deg (\phi)=2$.
So we obtain a situation like in Figure \ref{Figuur 4}. Let $r\in \{r_1, \cdots ,r_t\}$ and let $T$ be the associated subtree of $\phi (e_1)$.
Then $\phi ^{-1} (r) = \{ r', r'' \} \subset e_1$ with $r' \neq r''$.
The tangent space $T_r (\phi (e_1))$ consists of 3 elements (see Figure \ref{Figuur 5}).
Hence there exists $w \in T_{r'}(G_0) \setminus T_{r'}(e_1)$ such that $d_{\phi}(r' ) (w) \in T_r(e)$.
Let $f$ be the edge of $G'_n$ defining $w$ hence $\phi (f) \subset e$.
Because of Lemma \ref{lemma3} this implies $r'$ is one of the points $q_i$ on $G_0$ and $f \subset \gamma _i$.
Repeating the same argument using $r''$ instead of $r'$ one obtains a contradiction to $\deg (\phi)=2$.
This proves $t=0$, hence $\phi (e_1)=e$ is a loop.
\begin{figure}[h]
\begin{center}
\includegraphics[height=3 cm]{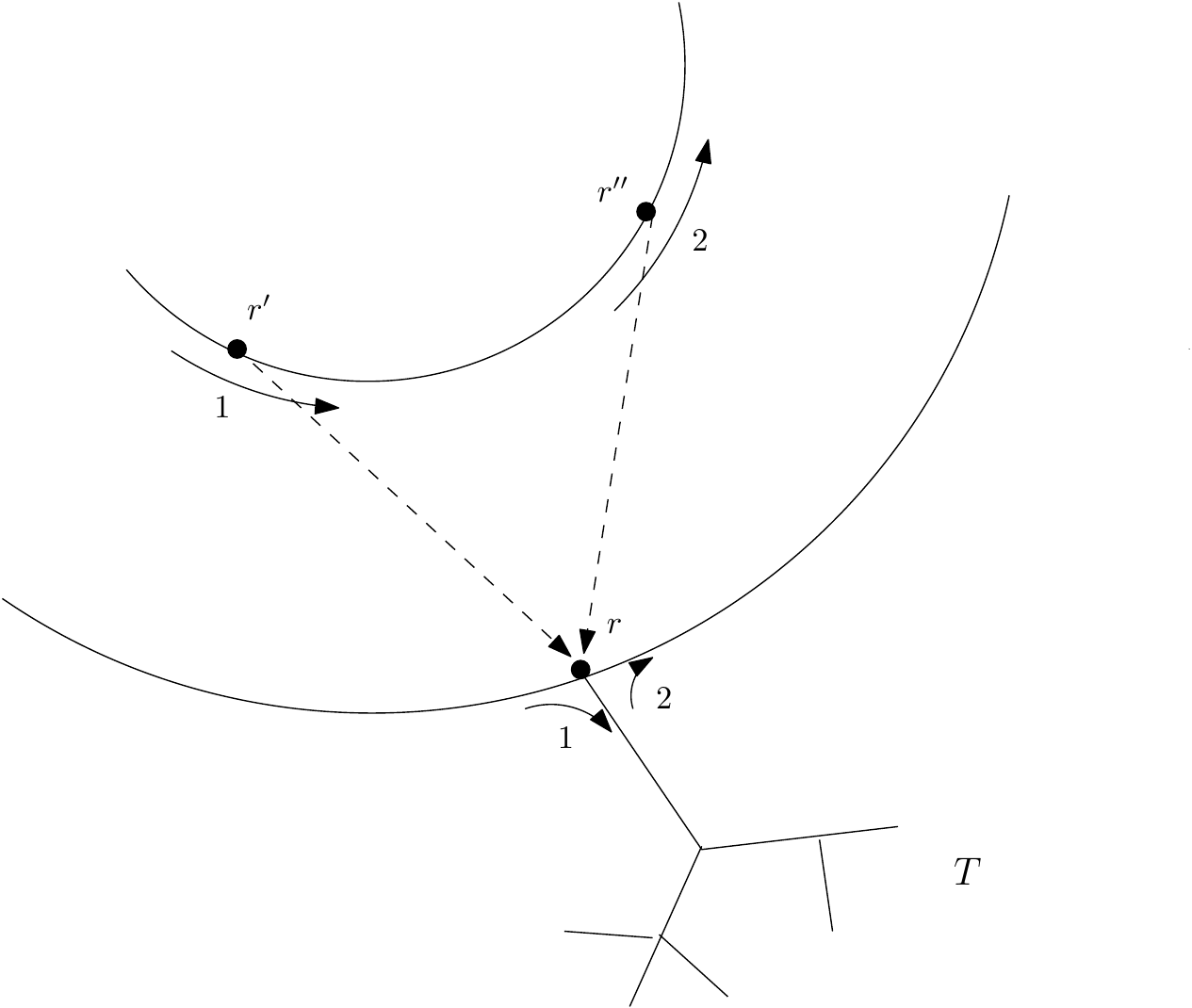}
\caption{In case $t\neq 0$}\label{Figuur 5}
\end{center}
\end{figure}

Because $\deg (\phi)=2$ we cannot go back and forth on $e$ following $e_1$ and taking the image under $\phi$.
In principle it could be the case that there exist different points $q', q''$ on $e_1$ such that the image of the closure of both components of $e_1 \setminus \{ q', q'' \}$ is equal to $e$ with $d_{q'}(\phi)=d_{q''}(\phi)=2$.
This would correspond to something like shown in Figure \ref{Figuur 6}.
\begin{figure}[h]
\begin{center}
\includegraphics[height=5 cm]{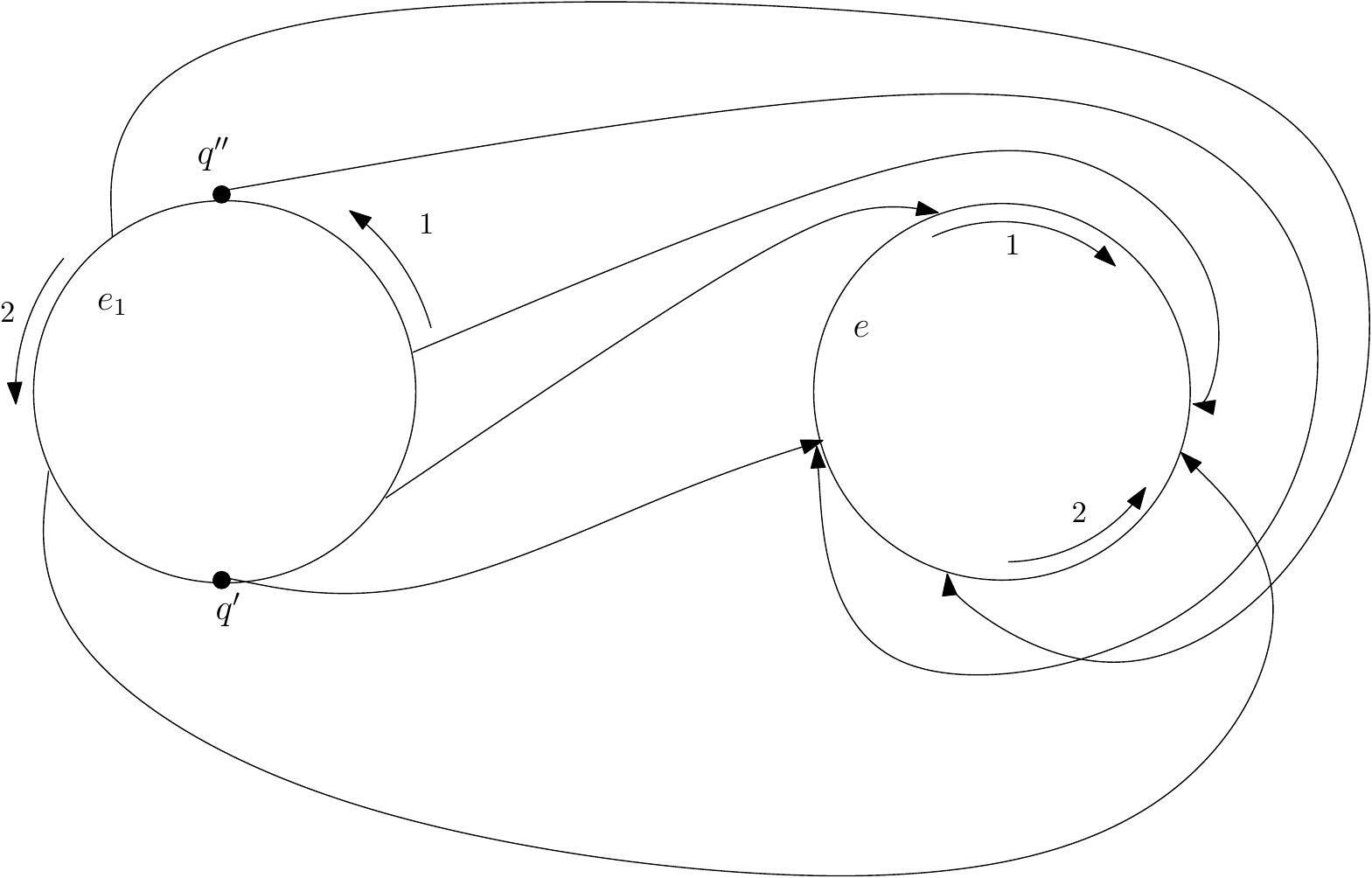}
\caption{A case that cannot occur}\label{Figuur 6}
\end{center}
\end{figure}
In that case there should exist $v\in T_{q'}(G'_n)$ with $d_{\phi}(q')(v) \in T_{\phi (q')}(e) \setminus d_{\phi}(q')(T_{q'}(e_1))$.
Hence the edge $f$ of $G'_n$ defining $v$ satisfies $\phi(f) \subset e$, contradicting $\deg (\phi)=2$.
Hence the situation from Figure \ref{Figuur 6} cannot occur.
It follows that in case there exist $q' \neq q''$ on $e_1$ such that $\phi (q')=\phi (q'')$ then $\phi \bigr | _{e_1} : e_1 \rightarrow e$ is harmonic of degree 2 and $d_q(\phi \bigr | _{e_1})=1$ for all $q \in e_1$.
In this case $\phi (v_1m_2v_2) \cap e = \{ \phi (v_1), \phi (v_2) \}$ since $\deg (\phi )=2$.
In case $\phi (v_1) \neq \phi (v_2)$ then this contradicts $g(E)=1$.
In case $\phi (v_1) = \phi (v_2)$ then because of the description of $\phi \bigr |_{e_1}$ one has $l(v_1m_1v_2)=l(v_1m_0v_2)$.
We assume this is not the case, so we can assume $\phi \bigr |_{e_1} : e_1 \rightarrow e$ is bijective.

In case for each edge $f$ on $G'_n$ with $f \subset e_1$ one has $d_f(\phi)=2$ then again, since $\phi (v_1) \neq \phi (v_2)$ we have $\phi (v_1m_2v_2) \cap e = \{ \phi (v_1), \phi (v_2) \}$, contradicting $g(E)=1$.
Assume there exists $q\in e_1$ being a vertex of $G'_n$ and two edges $e', e''$ of $G'_n$ contained in $e_1$ with vertex end point $q$ such that $d_{e'}(\phi)=1$ and $d_{e''}(\phi)=2$.
In particular it follows $d_q(\phi)=2$.
Let $v' \in T_q(G'_n)$ correspond to $e'$ then there exists $v\in T_q(G'_n)$ with $v \notin T_q(e_1)$ such that $d_{\phi}(q)(v)=d_{\phi}(q)(v')$.
Let $f$ be the edge of $G'_n$ defining $v$, then $\phi (f) \subset e$.
From Lemma \ref{lemma3} it follows $q$ is one of the points $q_i$ and $f \subset \gamma_i$.
Since $g(E)=1$ it follows $e \subset \phi (\gamma)$, but this is impossible because $d_{e''}(\phi)=2$ and $\deg (\phi)=2$.
This proves $\phi \bigr |_{e_1} : e_1 \rightarrow e$ is an isomorphism of metric graphs.

\noindent \underline{Step 3:} Finishing the proof of the theorem.

\noindent It follows $\phi (v_1)$ and $\phi (v_2)$ do split $e$ into two parts $e'$ and $e''$ of lengths $l(v_1m_1v_2)$ and $l(v_1m_0v_2$.
Since $g(E)=1$ it follows $\phi (v_1m_2v_2)$ contains $e'$ or $e''$, we assume it contains $e'$.
In case $\phi (v_1m_2v_2)$ would contain $\tilde {q}\in e'' \setminus \{ \phi (v_1), \phi (v_2) \}$ then because of $g(E)=1$ it follows $\phi (v_1m_2v_2)$ contains one of the connected components of $e'' \setminus \{ \tilde {q} \}$.
On that connected component we get a contradiction to $\deg (\phi)=2$.

So  we obtain different points $r_1 , \cdots , r_t$ on $e'$ and trees $T_1 , \cdots , T_t$ with $T_i \cap e'=r_i$ for $1 \leq i \leq t$ and $T_i \cap T_j = \emptyset$ for $i \neq j$ such that $\phi (v_1m_2v_2)= e' \cup T_1 \cup \cdots \cup T_t$.
It is possible (and we are going to prove) that $t=0$, hence $e'=\phi (v_1m_2v_2)$.
In case $r_i \notin \{ \phi (v_1), \phi (v_2) \}$ then there exist two different points $r', r''$ on $v_1m_2v_2$ such that $\phi (r') = \phi (r'') = r_i$.
Since $r_i$ is alse the image of a point on $e_1$ we get a contradiction to $\deg (\phi)=2$.
Hence $t \leq 2$ and $r_i \in \{ \phi (v_1), \phi (v2) \}$.

Assume $r_i = \phi (v_1)$.
We obtain $q \in v_1m_2v_2$ with $q \notin \{ v_1, v_2 \}$ and $\phi (q) = \phi (v_1)$.
There exists $v \in T_q(G'_n)$ such that $d_{\phi}(q)(v)$ is the element of $T_{\phi (q)}(E)$ defined by $e''$.
Let $f$ be the edge of $G'_n$ defining $v$.
From Lemma \ref{lemma3} it follows $q$ is one of the points $q_i$ and $f \subset \gamma_i$.
Since $g(E)=1$ we obtain $\phi (\gamma_i)$ contains $e$.
This implies that for $P \in e'$ there are at least 3 points contained in $\phi ^{-1}(P)$, a contradiction.
This proves $t=0$, hence $\phi (v_1m_2v_2)=e'$.
Since $\deg (\phi)=2$ it also implies $d_f(\phi)=1$ for each edge $f$ contained in $v_1m_2v_2$, hence $l(e')=l(v_1m_2v_2)$.
Since $l(v_1m_2v_2) \notin \{ l(v_1m_0v_2), l(v_1m_1v_2) \}$ we obtain a contradiction, finishing the proof of the theorem.
\end{proof}

As a corollary of the theorem we obtain the goal of this paper.

\begin{corollary}
For each genus $g \geq 5$ there is metric graph $\Gamma$ of genus $g$ satisfying $w^1_4 \geq 1$ that has no divisor of Clifford index at most 1 and is not tropically equivalent to a metric graph $\Gamma'$ such that there exists a finite harmonic morphism $\pi : \Gamma' \rightarrow E$ of degree 2 with $g(E)=1$. In particular in case $g \geq 6$ the graph $\Gamma$ cannot be lifted to a curve $X$ of genus $g$ satisfying $\dim (W^1_4)=1$.
\end{corollary}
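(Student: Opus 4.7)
The plan is to take $\Gamma := G_{g-3}$ for each $g \ge 5$; then $n := g-3 \ge 2$ and $g(\Gamma) = n+3 = g$ by the count at the beginning of Section \ref{section3}. The bound $w^1_4(\Gamma) \ge 1$ is immediate from Proposition \ref{proposition2}. For the claim that $\Gamma$ carries no very special divisor of Clifford index $\le 1$, I would split into the cases $c(D) = 0$ and $c(D) = 1$: the former would force $\Gamma$ to be hyperelliptic via the theorem of \cite{ref15} quoted inside the proof of Proposition \ref{proposition1}, but the opening paragraph of that proof already shows $G_n$ has no $g^1_2$; the latter amounts to a $g^r_{2r+1}$ on $\Gamma$, and the very-speciality condition $\rk(D) > \deg(D) - g + 1$ restricts $r$ to the range $1 \le r \le n$, so all such linear systems are excluded by Proposition \ref{proposition1}.

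For the tropical equivalence clause I would argue by contradiction. Assume some metric graph $\Gamma'$ tropically equivalent to $\Gamma$ admits a finite harmonic morphism $\pi : \Gamma' \to E$ of degree $2$ with $g(E) = 1$. By definition of tropical equivalence, $\Gamma$ and $\Gamma'$ admit a common tropical modification $G'$. I would then extend $\pi$ across each elementary tropical modification turning $\Gamma'$ into $G'$ by attaching the corresponding infinite edges to $E$ with the appropriate dilation factors; the result is a finite harmonic morphism $G' \to E'$ of degree $2$, where $E'$ is a tropical modification of $E$ and hence still has genus $1$. Since $G'$ is simultaneously a tropical modification of $\Gamma = G_n$, this contradicts the Theorem proved above.

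Finally, for $g \ge 6$, I would combine everything with the classification of \cite{ref8}. Suppose $\Gamma = G_n$ were the skeleton of a smooth curve $X$ of genus $g$ with $\dim W^1_4(X) = 1$ in the sense of Section \ref{section4}. Since $\Gamma$ is not hyperelliptic, neither is $X$, so by \cite{ref8} the curve $X$ must be trigonal, a smooth plane quintic (whence $g = 6$), or bi-elliptic. The first two cases would specialise via $\dim|D| \le \rk(\tau_* D)$ to a $g^1_3$ or a $g^2_5$ on $\Gamma$, both excluded by Proposition \ref{proposition1}, while the remaining bi-elliptic case produces, via the analytic-to-tropical comparison of \cite{ref12} recalled just before Lemma \ref{lemma3}, a finite harmonic morphism of degree $2$ from a tropical modification of $\Gamma$ onto a genus-$1$ metric graph, contradicting the previous paragraph. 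The substantive work has already been secured by the Theorem and Propositions \ref{proposition1}--\ref{proposition2}; the only mild subtlety I anticipate is the extension of a harmonic morphism across a common tropical modification in the second paragraph, which is otherwise routine from the definitions.
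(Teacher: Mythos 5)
Your proof is correct and follows essentially the same route as the paper, which obtains the corollary from Proposition \ref{proposition2} (for $w^1_4 \geq 1$), Proposition \ref{proposition1} together with the $g^1_2$-exclusion and the result of \cite{ref15} (for the absence of very special divisors of Clifford index at most 1), the Theorem (for the harmonic-morphism clause), and the Section \ref{section4} discussion via the classification of \cite{ref8} (for the lifting clause). The one step you add explicitly --- extending the degree-2 harmonic morphism across the common tropical modification to reduce ``tropically equivalent'' to the Theorem's ``tropical modification'' hypothesis --- is the right bridge; just note that keeping the morphism harmonic may require attaching further infinite edges at the other points of each fibre, which is harmless because the resulting graph is still a tropical modification of $G_n$.
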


\begin{bibsection}
\begin{biblist}

\bib{refextra}{article}{
	author={Amini, O.},
	author={Baker, M.},
	title={Linear series on metrized complexes of algebraic curves},
	journal={mathematische annalen},
	pages={to appear},
}
\bib{ref12}{article}{
	author={Amini, O.},
	author={Baker, M.},
	author={Brugall\'e, E.},
	author={Rabinoff, J.},
	title={Lifting harmonic morphisms I : metrized complexes and Berkovich skeleta},
	journal={preprint},
}
\bib{ref17}{article}{
	author={Amini, O.},
	author={Baker, M.},
	author={Brugall\'e, E.},
	author={Rabinoff, J.},
	title={Lifting harmonic morphisms II : tropical curves and metrized complexes},
	journal={preprint},
}
\bib{ref4}{article}{
	author={Baker, M.},
	title={Specialization of linear systems from curves to graphs},
	journal={Algebra Number Theory},
	volume={2},
	year={2008},
	pages={613-653},
}
\bib{ref1}{article}{
	author={Baker, M.},
	author={Norine, S.},
	title={Harmonic morphisms and hyperelliptic graphs},
	journal={International Math. Research Notices},
	year={2009},
	volume={15},
	pages={2914-2955},
}
\bib{ref3}{article}{
	author={Baker, M.},
	author={Payne, S.},
	author={Rabinoff, J.},
	title={Non-archimedean geometry, tropicalization and metrics on curves},
	journal={preprint},
}
\bib{ref15}{article}{
	author={Coppens, M.},
	title={Clifford's Theorem for graphs},
	journal={preprint},
}
\bib{ref2}{article}{
	author={Coppens, M.},
	title={Free divisors on metric graphs},
	journal={preprint},
}
\bib{ref5}{article}{
	author={Facchini, L.},
	title={On tropical Clifford's theorem},
	journal={Ricerche Mat.},
	volume={59},
	year={2010},
	pages={343-349},
}
\bib{ref9}{article}{
	author={Len, Y.},
	title={The Brill-Noether rank of a tropical curve},
	journal={J. Algebr. Comb.},
	volume={40},
	year={2014},
	pages={841-860}
}
\bib{ref13}{article}{
	author={Lim, C.M.},
	author={Payne, S.},
	author={Potashnik, N.},
	title={A note on Brill-Noether theory and rank-determining sets},
	journal={Int. Math. Res. Notes},
	year={2012},
	pages={5484-5504},
}
\bib{ref16}{article}{
	author={Martens, H.H.},
	title={On the variety of special divisors on a curve},
	journal={J. reine angew. Math.},
	year={1967},
	volume={227},
	pages={11120},
}
\bib{ref8}{article}{
	author={Mumford, D.},
	title={Prym varieties I},
	journal={Contributions to analysis},
	year={1974},
	pages={325-350},
}

\end{biblist}
\end{bibsection}

\end{document}